\newtheorem{thm}{Theorem}
\newtheorem{lem}[thm]{Lemma}
\newtheorem{rem}[thm]{Remark}
\def\R{{\mathbb R}}
\def\E{{\mathbb E\,}}
\def\P{{\mathbb P}}
\newenvironment{proof}[1][] {\noindent {\bf Proof#1:} }{\hspace*{\fill}$\square$\medskip\par}
\def\AA{{\mathcal A}}
\def\eps{{\varepsilon}}
\def\d{{\,\mathrm d}}
\def\disdet{\Delta}
\def\condet{D}
\def\dissto{\Sigma}
\def\dis{\Xi}
\def\MA{{\mathcal{A}}}
\def\diag{\operatorname{diag}}
\def\V{\mathrm{Var}}
\def\wt{\widehat{t}}
\def\wv{\widehat{v}}
\providecommand{\keywords}[1]
{
  \small	
  \textbf{\textit{Keywords---}} #1
}
\title{Scaling limit of stretched Brownian chains}
\author{Frank Aurzada, Volker Betz, and Mikhail Lifshits}
\date{\today}
\begin{document}
\maketitle

\begin{abstract} 

We show that a properly scaled stretched long Brownian chain converges to a two-parametric stochastic process, given by the sum of an explicit deterministic continuous function and the solution of the stochastic heat equation with zero boundary conditions. 
\end{abstract}

\keywords{Gaussian process, Ornstein--Uhlenbeck process, interacting particle system, stochastic heat equation (SHE)}

\section{Introduction}

Interacting particle models are a popular tool to model catastrophic events in materials, 
for example the rupture of long polymer chains. For this problem, a relatively rich history of results and approaches exists.
The basic model in all cases is a chain of $d$ particles, modelled by their time-dependent  locations $X_t^1, \ldots, X_t^d$, and sometimes also by their time-dependent momenta. These particles are subjected to thermal noise and a 
nearest neighbour potential $V$ that generates an attractive force between particles with adjacent indices. One is then 
interested in the first time when one of the particle distances exceeds a given threshold, at which time one declares the chain 
to be broken.\\[1mm]
The case where the particle momenta are taken into consideration and thus the deterministic dynamics is Newtonian is rather difficult to treat, in particular when $V$ is not quadratic and 
thus the resulting Newton equations are not linear. This is due to the complexity of the resulting deterministic dynamical system. 
There are some early, non-rigorous works \cite{DT94,OT94} on this problem, and some more recent, rigorous ones 
\cite{MalMuz,Mal,Muz}, where however only the mean break time is investigated. In \cite{abh}, a very short chain is treated in an 
intermediate regime where the mass of the particles vanishes in the limit of small noise.\\[1mm]
The overdamped case is  more tractable: \cite{ABL1,ABL2,ABL3} rigorously study the detailed asymptotic distribution of the 
rupture time and position of finite chains, where the chain of particles is pulled apart at the right end and in the limit 
when both the speed of pulling and the variance of the noise vanish. In other cases, only non-rigorous results are availabe. 
In \cite{RBM19}, it is found that when the noise is not asymptotically vanishing, a chain of particles is `softer' in the middle, 
in the sense that it is more likely to break there than near the boundary. This is in contrast to the results of \cite{ABL1,ABL2,ABL3},
where it is found that in the case of asymptotically small noise, the break point distribution is uniform except at the two endpoints,
where it is half as likely to break. An interesting aspect of the break distribution is studied in \cite{CHHP21}: the authors find that 
under the condition that the chain equilibrates before it breaks (which corresponds to very slow pulling in the context of  
\cite{ABL1,ABL2,ABL3}),  the distribution of break times has a power law tail at the side of untypically short times, and this 
power law is independent of the length of the chain. This shows that there are still many interesting phenomena to be understood in 
the context of particle models of chain rupture, in particular in the limit of infinitely many particles.\\[1mm]
The present paper is a first step towards a rigorous study of interacting particle chains in the presence of outer forces, 
{\it in the limit of infinitely many particles}. We work in the framework adapted in \cite{ABL1}, where the forces between the chain elements 
are linear and the resulting process is thus Gaussian. In this setting, we identify the correct scaling of both the pulling speed 
and the noise, so that a non-trivial and well-defined limit process emerges. We prove the existence of a unique limiting process
consisting of a deterministic part (due to the pulling) and a mean zero random part, which represents the noise. The random component
of our limiting process is the solution of the stochastic heat equation with zero boundary conditions.\\[1mm]
The study of the convergence for classes of interacting particle processes that include the one we are treating 
(but without the pulling component) goes back at least to \cite{Fu83}, and has been carried out in \cite{Gy98} in great 
generality and for a strong type of convergence. We could have thus derived the {\it non-deterministic part} of our results from 
applying those theories. Nevertheless, we give a detailed proof of all our results. A first reason behind this is that our proofs are not
very long and including them makes the paper self-contained. Also, since we are working with Gaussian measures, 
our proof is different from, and significantly more transparent than, the general one, which may be a benenfit in itself. A second reason
is connected with the observation that our results show that the scaling that leads to the stochastic heat equation is not the correct one 
for studying the rupture of the chain. This is because, on the one hand, in the limiting equation, 
the roughness of the solution implies that any infinitesimal distance between two neighbouring particles
will be exceeded immediately. Thus there is no sensible break condition where the break time is greater than zero with positive probability.
More fundamentally, on the other hand, the interacting chain model relies on the fact that the order of the elements in the Brownian chain is conserved during 
the evolution - otherwise, the modelling assumption of nearest neighbour interaction, where neighbours are defined by the initial 
ordering of particles, becomes untenable. Although no explicit scaling ever appears in the
article \cite{CHHP21}, this means the scaling that the authors must have in mind when stating the independence of their results
from the length of the chain must be a different one from the one considered here. The investigation of different scalings leading to 
non-trivial rupture conditions is an interesting problem, and work in progress, and the detailed and rather explicit 
proof given here is a good reference point for finding the correct scalings and proving statements about the respective limits. \\[1mm]

This paper is structured as follows. In Section~\ref{sec:mainresults}, we collect our main results: We start with a precise specification of the model and then state the limiting result for the chain with pulling, when the number of particles tends to infinity. Afterwards, the result for the homogeneous system which is related to \cite{Gy98} and its relation to the stochastic heat equation is discussed. All proofs are collected in Section~\ref{sec:proofs}.

\section{Main results} \label{sec:mainresults}
\subsection{Brownian chains}
For every positive integer $d$ a stretched Brownian chain of $d+1$ particles is a solution of the following
system:
\begin{align}
 \d X_t^i &=(X_t^{i+1}-2X_t^{i}+X_t^{i-1}) \d t +  \sigma \d B^{i,d}_t, \quad i=1,\ldots,d-1; \notag
\\  \label{eqn:system0}
  X_t^0 &= 0, \qquad  X_t^d=d+ \eps\, t, \qquad X_0^i=i,\quad i=1,\ldots,d-1,
\end{align}
where $(B_t^{i,d})_{t\geq 0}$ are independent Brownian motions and $\eps,\sigma\geq 0$ are fixed.

Here the position of the leftmost particle $X_0$ is fixed, the rightmost particle is deterministically pulled to the right
with speed $\eps$, the nearest neighbours interact, and the particles are subject to independent Brownian motions.

This system was studied in \cite{ab,abh,ABL1,ABL2,ABL3} with a motivation to investigate chain rupture.
In these works, the number of particles was fixed and the system's behavior was studied as a function of the parameters
$\sigma$ and $\eps$. A natural question is {\it what happens when the number of particles grows to infinity?}
We show in this work that {\it after an appropriate scaling of $\sigma,\eps$ and time $t$ the stretched Brownian chain converges to a solution of the stochastic heat equation}.

At the first step we scale the parameters $\sigma$ and $\eps$ by passing to the system
\begin{align}
 \d X_t^i &=(X_t^{i+1}-2X_t^{i}+X_t^{i-1}) \d t + \sqrt{d} \sigma \d B^{i,d}_t, \quad i=1,\ldots,d-1; \notag
\\  \label{eqn:system1}
  X_t^0 &= 0, \qquad  X_t^d=d+\frac{\eps}{d}\, t, \qquad X_0^i=i,\quad i=1,\ldots,d-1.
\end{align}

Note that the pulling strength is now $\frac{\eps}{d}$, as compared to $\eps$ in \eqref{eqn:system0}. Similarly, instead of $\sigma$ we use $\sqrt{d}\sigma$ for the standard deviation of the noise. 

At the second step, we scale time and space in \eqref{eqn:system1} by considering the process $\dis\left(t,\frac{i}{d}\right):= d^{-1} X_{d^2t}^i$ which satisfies
\begin{align}
   \d \dis\left(t,\frac{i}{d}\right) &= d^2 \left( \dis\left(t, \frac{i+1}{d}\right) -2\dis\left(t, \frac{i}{d}\right)+\dis\left(t, \frac{i-1}{d}\right)
   \right) \d t +\sqrt{d}\sigma \d B_t^{i,d},\quad i=1,\ldots,d-1;
\nonumber
\\
  \dis(t,0)&= 0,\qquad \dis(t,1)=1+\eps t, \qquad \dis\left(0,\frac{i}{d}\right)=\frac id,\quad i=1,\ldots,d-1, \label{eqn:dissystem-re}
\end{align}
where $(B_t^{i,d})_{t\geq 0}$ are again independent Brownian motions (different from the ones in system \eqref{eqn:system1}).

Here we  deal again with $d+1$ interacting particles. 
The leftmost particle is still fixed, while the rightmost particle is pulled to the right at speed $\eps$. The main difference with \eqref{eqn:system1} is that
at time zero the particles are now equally distributed over $[0,1]$. 

The final scaling step is the passage to the continuous mass variable from the discrete one by letting
\begin{equation} \label{eqn:dissystem-tv}
    \dis_d(t,v):=\dis\left(t,\frac{\lfloor d v\rfloor}{d}\right), \qquad \textrm{for } t\geq 0, \ v\in[0,1].
\end{equation}

\subsection{Convergence to a limiting process}

Our aim is to show that, as $d\to\infty$, the processes  $\dis_d$ converge to an explicitly described limiting process $X$ solving the stochastic heat equation with certain initial conditions (uniform initial distribution of the mass) and certain boundary conditions (pulling the right end of the mass to the right at speed $\eps$). Namely,
\begin{align}   \label{continuous_problem_X}
   \d X_t   &=   \Delta X_t \d t +\sigma \d B_t,
\\  \nonumber
  X(t,0)&= 0,\qquad  X(t,1)=1+\eps t, \qquad X(0,v)=v,
\end{align}
where $(B_t)$
is an infinite-dimensional Brownian motion and $\Delta X_t = \partial^2_{v,v} X(t,v)$. We shall outline why our limiting process satisfies (\ref{continuous_problem_X}) in Section~\ref{sec:sheconnection}.

Let us comment on the type of convergence. We use a  type of convergence that is widely used in so called strong invariance principles for random walks (Strassen \cite{Str}, Koml\'os--Major--Tusn\'ady \cite{KMT1,KMT2}, M.\,Cs\"org\H{o}--R\'ev\'esz \cite{CR} et al). Namely, we construct the processes $\dis_d$ and the limiting process $X$ on a common probability space so that $\dis_d(t,v) \to X(t,v)$ uniformly on compacts with probability one.

The following theorem is the main result of the article.

\begin{thm} \label{thm:mainthm}
Fix $\eps,\sigma>0$. Let  $(B^k_t)_{t\geq 0}$, $k=1,2,\ldots$, be a sequence of independent Brownian motions.  Then there exists an array of Brownian motions 
$(B_t^{1,d},\ldots,B_t^{d-1,d})$ linearly depending on $(B^k)_{k\ge 1}$ and independent for each fixed $d$  such that
the solutions $\dis_d$ to the system  \eqref{eqn:dissystem-re} (in the form
\eqref{eqn:dissystem-tv}) converge, as $d\to\infty$, to the limiting process
\begin{equation} \label{eqn:DplusS}
   X(t,v):=D(t,v)+S(t,v), \qquad t\geq 0, v\in[0,1],
\end{equation}
where
\begin{equation} \label{eqn:D}
  D(t,v) := v(1+\eps t) + \eps\left[  h(v) -  \sum_{k=1}^\infty c_k e^{-\pi^2 k^2 t} \sqrt{2} \,\sin(k\pi v) \right]
\end{equation}
and
\begin{equation} \label{eqn:S}
     S(t,v) :=  \sigma \sum_{k=1}^\infty \int_0^t e^{-(t-u)\pi^2 k^2} \d B^k_u \cdot \sqrt{2} \, \sin(k\pi v)
\end{equation}
with $h(v):=\frac{1}{6}v(v^2-1)$ and $c_k:=\sqrt{2} \,\int_0^1 h(v)\sin(k\pi v)\d v$. 
Namely, for every fixed $T>0$ it is true that
\[
   \sup_{t\in[0,T], v\in[0,1]} |\dis_d(t,v)-X(t,v)| \to 0 
   \quad\textrm{a.s.}, \quad  \textrm{as } d\to\infty.
\]
\end{thm}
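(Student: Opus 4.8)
The plan is to exploit linearity. Since \eqref{eqn:dissystem-re} is linear, I would write $\dis=\dis^{\mathrm{det}}+\dis^{\mathrm{sto}}$, where $\dis^{\mathrm{det}}$ solves the noiseless version of \eqref{eqn:dissystem-re} (same initial and boundary data) and $\dis^{\mathrm{sto}}$ solves the noise-driven version with zero initial and boundary data; a one-line check using $\dis(t,0)=0$ and $\dis(t,1)=1+\eps t$ confirms the sum solves the original system. Both parts are handled by diagonalising the discrete Dirichlet Laplacian: its orthonormal eigenvectors are $\phi^{(k,d)}_i=\sqrt{2/d}\,\sin(k\pi i/d)$, $k=1,\dots,d-1$, with eigenvalues $\mu^{(d)}_k=-4d^2\sin^2\!\big(k\pi/(2d)\big)$, which satisfy $4k^2\le|\mu^{(d)}_k|\le\pi^2k^2$ and $\mu^{(d)}_k\to-\pi^2k^2$ as $d\to\infty$.

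For the deterministic part, after subtracting the affine profile $v(1+\eps t)$ the remainder starts at zero, vanishes at the boundary, and satisfies a linear heat ODE with the time-independent forcing $-\eps(i/d)$; projecting on the eigenbasis gives scalar ODEs solved explicitly by $b_k(t)=\tfrac{g_k}{\mu^{(d)}_k}\big(e^{\mu^{(d)}_kt}-1\big)$, where $g_k$ is, up to normalisation, the discrete sine coefficient of the forcing. Letting $d\to\infty$ term by term, the Riemann sums defining $g_k$ converge to $2\eps\int_0^1 v\sin(k\pi v)\,\d v$, the eigenvalues and the sines $\sin(k\pi\lfloor dv\rfloor/d)$ converge, and a short computation identifies the limit of the $k$-th term with $\eps\,c_k(1-e^{-\pi^2k^2t})\sqrt2\sin(k\pi v)$; summing and using $h=\sum_k c_k\sqrt2\sin(k\pi\,\cdot\,)$ reproduces $D$ of \eqref{eqn:D}. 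Abel summation against the bounded partial sums of $\sin$ gives $|g_k|=O(\eps\sqrt d/k)$ uniformly in $d$, which together with $|\mu^{(d)}_k|\ge4k^2$ bounds the general term uniformly in $d$ and $t$ by $C\eps/k^3$; hence the series converges uniformly on $[0,T]\times[0,1]$ and $\dis^{\mathrm{det}}_d\to D$ there. This step is routine.

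The stochastic part uses the coupling announced in the theorem: set $B^{i,d}_t:=\sum_{k=1}^{d-1}\phi^{(k,d)}_iB^k_t$. As the matrix $(\phi^{(k,d)}_i)_{i,k}$ is orthogonal, the $B^{i,d}$ are independent Brownian motions for each fixed $d$, and $\sum_i\phi^{(k,d)}_iB^{i,d}=B^k$. Projecting $\dis^{\mathrm{sto}}$ on the eigenbasis then produces independent Ornstein--Uhlenbeck coordinates driven by the very same $B^k$, namely $Y^{(d)}_k(t)=\int_0^t e^{\mu^{(d)}_k(t-u)}\,\d B^k_u$, so that the Gaussian field $\widehat{S}_d(t,w):=\sigma\sqrt2\sum_{k=1}^{d-1}Y^{(d)}_k(t)\sin(k\pi w)$ restricts on the grid to $\dis^{\mathrm{sto}}_d$; here the scaling $\sqrt d\,\sigma$ is exactly what cancels the $\sqrt{1/d}$ of the eigenvector normalisation. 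The core of the proof is to show $\sup_{t\le T,\,w\in[0,1]}|\widehat{S}_d(t,w)-S(t,w)|\to0$ a.s., where $S(t,w)=\sigma\sqrt2\sum_{k\ge1}Y_k(t)\sin(k\pi w)$ and $Y_k(t)=\int_0^t e^{-\pi^2k^2(t-u)}\,\d B^k_u$. Writing the difference as $\sigma\sqrt2\big[\sum_{k<d}(Y^{(d)}_k-Y_k)\sin(k\pi w)-\sum_{k\ge d}Y_k\sin(k\pi w)\big]$, I would control each Gaussian field on the square by the quantitative Kolmogorov continuity criterion, using the uniform (in $t$ and $d$) variance bounds $\mathrm{var}\,Y_k(t)\le C/k^2$, $\mathrm{var}\,Y^{(d)}_k(t)\le C/k^2$ and $\mathrm{var}\big(Y^{(d)}_k(t)-Y_k(t)\big)\le Ck^2/d^4$ (the last from $\pi^2k^2-|\mu^{(d)}_k|\le Ck^4/d^2$), together with Hölder-type bounds on spatial and temporal increments; the delicate point is that one must keep the oscillation of $\sum_k(\cdots)\sin(k\pi w)$ and split the sums at the scale $k\sim|w-w'|^{-1}$ rather than bounding termwise, since $S$ is only Hölder-$\tfrac12^-$ in space. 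This yields $\E\big[\sup_{t\le T,\,w}|\widehat{S}_d(t,w)-S(t,w)|^p\big]\le C_p\,d^{-\beta p}$ for some $\beta>0$ and all $p$; choosing $p$ large and applying Borel--Cantelli gives the a.s. statement. The same Kolmogorov estimate applied to $\sum_{M<k\le N}Y_k(t)\sin(k\pi w)$ shows en route that $S$ has a.s. continuous paths on $[0,T]\times[0,1]$.

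Finally I would assemble the two pieces. Since $\dis^{\mathrm{sto}}_d(t,v)=\widehat{S}_d(t,\lfloor dv\rfloor/d)$ and likewise $\dis^{\mathrm{det}}_d(t,v)=\widehat{D}_d(t,\lfloor dv\rfloor/d)$ for the analogous deterministic trigonometric polynomial $\widehat{D}_d$, and $|\lfloor dv\rfloor/d-v|\le1/d$, the triangle inequality gives
\[
\sup_{t\le T,\,v\in[0,1]}|\dis_d(t,v)-X(t,v)|\le \sup_{t\le T,\,w}|\widehat{D}_d(t,w)-D(t,w)|+\omega_D(1/d)+\sup_{t\le T,\,w}|\widehat{S}_d(t,w)-S(t,w)|+\omega_S(1/d),
\]
where $\omega_D$ and $\omega_S$ are the moduli of continuity of $D$ and (a.s.) of $S$ on the compact $[0,T]\times[0,1]$. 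The first two terms tend to $0$ deterministically and the last two a.s., which is the assertion. I expect the single hardest ingredient to be the uniform-in-$d$ modulus-of-continuity estimate for the Gaussian fields $\sum_{k<d}(Y^{(d)}_k-Y_k)\sin(k\pi w)$ and $\sum_{k\ge d}Y_k\sin(k\pi w)$: because the limiting field $S$ has only spatial Hölder regularity below $\tfrac12$, crude termwise bounds diverge and one must genuinely exploit the cancellation in the sine series while tracking the explicit decay in $d$.
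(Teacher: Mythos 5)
Your proposal is correct and follows essentially the same route as the paper: the same deterministic/stochastic splitting, the same diagonalization by the discrete sine basis with eigenvalues $-4d^2\sin^2(k\pi/(2d))$, the same orthogonal coupling $B^{i,d}=\sum_k \phi^{(k,d)}_i B^k$, and the same key variance estimates ($\V[Y_k]\le C/k^2$, $\V[Y^{(d)}_k-Y_k]\le Ck^2/d^4$, and the H\"older moduli in $v$ and $t$ obtained by splitting the sine series at $k\sim|v-v'|^{-1}$). The only substantive variation is the final step: where you propose a quantitative Kolmogorov/moment bound $\E\bigl[\sup|\widehat S_d-S|^p\bigr]\le C_p d^{-\beta p}$ followed by Borel--Cantelli, the paper discretizes $[0,T]\times[0,1]$ at scale $1/d$, controls the grid values via the pointwise variance bound $O(1/d)$ and the in-cell oscillations via a Sudakov--Fernique/concentration large-deviation lemma, and then applies Borel--Cantelli --- the two devices are interchangeable here.
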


\begin{rem} {\rm
 The limiting process $(X(t,v))_{t\geq 0, v\in[0,1]}$ is  Gaussian and its expectation
$(D(t,v))_{t\geq 0, v\in[0,1]}$ is the limit of  $\dis_d$ in the deterministic case $\sigma=0$.

The process  $(S(t,v))_{t\geq 0, v\in[0,1]}$ is the limit of the solution of the `homogeneous system' with zero initial and zero boundary conditions. This will be explained in the next subsection.
}\end{rem}

\subsection{Convergence of the homogeneous systems}
The main idea behind our consideration is a splitting of a finite stretched Brownian chain's movement into a sum of two components. The first one is deterministic and takes into account the initial and boundary conditions. The second one solves the homogeneous stochastic equation
with zero initial and boundary conditions. Moreover, it admits a fairly explicit representation that we describe in this section. 
The two components lead to the terms $D$ and $S$ in \eqref{eqn:DplusS}, respectively.

In order to formulate the result, we need the following definition:
\begin{equation} \label{eqn:defnofA}
   \MA:= \begin{pmatrix}
   -2 & 1 & 0 & 0& \ldots& 0 \\
    1 & -2 & 1 & 0 &\ldots& 0 \\
    0 & 1& -2 & 1 & &  \\
      &   & \ddots & \ddots & \ddots  \\
    0 &\ldots & & 1 & -2 & 1 \\
    0 &\ldots & & 0 & 1 & -2
\end{pmatrix} \in \R^{(d-1)\times(d-1)}.
\end{equation}
This matrix is the interaction matrix of the particle systems \eqref{eqn:system0} and \eqref{eqn:system1}. Lemma~\ref{lem:eigenvectors} below shows that one can diagonalize  $\MA$ with the help of the eigenvectors and eigenfunctions: More precisely,
\begin{equation} \label{eqn:defnofQ}
\MA=Q\Lambda Q^\top, \qquad\text{with}\qquad \Lambda:=\diag(\lambda_1,\ldots,\lambda_{d-1}),  Q_{j,k}:=\sqrt{2/d}\cdot f_k^j,
\end{equation}
where
$$
\lambda_k:= - 2(1-\cos(k\pi/d)),\qquad f_k^m:=\sin(k m\pi/d),\quad m=1,\ldots,d-1, k=1,\ldots,d.
$$
It follows from Lemma~\ref{lem:eigenvectors} that $Q$ is an orthonormal matrix.

Now we are ready to state the representation result for finite $d$ and the corresponding
convergence result for homogeneous systems.

The following theorem is a substantial step for proving Theorem \ref{thm:mainthm}. Furthermore, we believe 
that it may be of an independent interest as a building block for eventual studies of similar systems with the boundary conditions different from ours.

\begin{thm} \label{thm:stolem2}
Fix $\sigma>0$. Let  $(B^k_t)_{t\geq 0}$, $k=1,2,\ldots$, be a sequence of independent Brownian motions. Consider the independent Brownian motions 
$(B_t^{1,d},\ldots,B_t^{d-1,d})^\top := Q (B_t^1,\ldots,B_t^{d-1})^\top$. 
The system
\begin{align}
    \d \dissto\left(t, \frac{i}{d}\right) &= d^2 \left( \dissto\left(t, \frac{i+1}{d}\right)-2\dissto\left(t, \frac{i}{d}\right)+\dissto\left(t, \frac{i-1}{d}\right)\right) \d t + \sqrt{d}\sigma \d B^{i,d}_t,\qquad i=1,\ldots,d-1; \label{eqn:system1sdes}
\\
     \dissto(t,0)&=0,\qquad \dissto(t,1)=0,\qquad \dissto\left(0,\frac{i}{d}\right)=0,\quad i=1,\ldots,d, \label{eqn:stochsystem0}
\end{align}
allows the following explicit solution
$$
    \dissto\left(t,\frac{i}{d}\right)=\sigma  \sum_{k=1}^{d-1} \int_0^t e^{ d^2 \lambda_k (t-u)} \d B_u^k \cdot \sqrt{2} \sin(k \pi i/d).
$$
Define $\dissto_d(t,v):=\dissto\left(t,\lfloor vd\rfloor/d\right)$. 
Then, for any $T>0$, 
\[
    \sup_{t\in[0,T], v\in[0,1]} |\dissto_d(t,v)-S(t,v)| \to 0
\]    
almost surely, where $S(t,v)$ is given by \eqref{eqn:S}.
\end{thm}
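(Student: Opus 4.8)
\smallskip\noindent\textbf{Proof strategy.}
The plan is to argue directly from the explicit series in the statement. Write $\xi^{(d)}_k(t):=\int_0^t e^{d^2\lambda_k(t-u)}\,\d B^k_u$ and $\xi_k(t):=\int_0^t e^{-\pi^2 k^2(t-u)}\,\d B^k_u$, so that $\dissto(t,i/d)=\sigma\sqrt2\sum_{k=1}^{d-1}\xi^{(d)}_k(t)\sin(k\pi i/d)$ and $S(t,v)=\sigma\sqrt2\sum_{k=1}^\infty\xi_k(t)\sin(k\pi v)$; the essential point is that both families are driven by the \emph{same} Brownian motions $B^k$. Since $d^2\lambda_k=-2d^2\bigl(1-\cos(k\pi/d)\bigr)=-\pi^2k^2+O(k^4/d^2)$ and, writing $a_k:=|d^2\lambda_k|$, $b_k:=\pi^2k^2$, one has $4k^2\le a_k,b_k\le\pi^2k^2$ for $1\le k\le d$, the It\^o isometry gives, uniformly in $t\ge0$ and in $d$, the estimates $\V\bigl(\xi^{(d)}_k(t)\bigr)\le 1/(2a_k)\lesssim k^{-2}$, $\V\bigl(\xi_k(t)\bigr)\le 1/(2b_k)\lesssim k^{-2}$, and $\V\bigl(\xi^{(d)}_k(t)-\xi_k(t)\bigr)\le\int_0^\infty\bigl(e^{-a_ks}-e^{-b_ks}\bigr)^2\,\d s=(a_k-b_k)^2/\bigl(2a_kb_k(a_k+b_k)\bigr)\lesssim k^2/d^4$. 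Together with $|\sin(k\pi v)-\sin(k\pi v')|\le\pi k|v-v'|$ and $|\sin|\le 1$, these are the only analytic inputs; note that the formulas automatically vanish at $i\in\{0,d\}$ and at $t=0$, so the zero initial and boundary conditions are built in.

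Next I introduce $w_d(v):=\lfloor vd\rfloor/d$, the partial sums $S_N(t,v):=\sigma\sqrt2\sum_{k=1}^N\xi_k(t)\sin(k\pi v)$, the remainders $R_N:=S-S_N$, and the discrepancy field $\widetilde S_d(t,w):=\sigma\sqrt2\sum_{k=1}^{d-1}\bigl(\xi^{(d)}_k(t)-\xi_k(t)\bigr)\sin(k\pi w)$. A one-line rearrangement gives
\[
\dissto_d(t,v)-S(t,v)=\widetilde S_d\bigl(t,w_d(v)\bigr)-R_{d-1}\bigl(t,w_d(v)\bigr)+\bigl(S(t,w_d(v))-S(t,v)\bigr),
\]
and since $\widetilde S_d$, $R_{d-1}$ and $S$ are genuinely continuous functions of $(t,w)\in[0,T]\times[0,1]$ while $|w_d(v)-v|\le 1/d$, it suffices to prove two statements: \emph{(I)} $\sup_{[0,T]\times[0,1]}|\widetilde S_d|\to 0$ a.s.\ as $d\to\infty$, and \emph{(II)} the partial sums $S_N$ of the series \eqref{eqn:S} converge uniformly on $[0,T]\times[0,1]$ a.s.\ as $N\to\infty$. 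Statement (II) makes $S$ a well-defined continuous field (so the third term above, bounded by the oscillation of $S$ over $v$-intervals of length $1/d$, tends to $0$ a.s.) and identifies it with \eqref{eqn:S}, while (I) accounts for the discrete-versus-continuous discrepancy between the modes. The point of this decomposition is that it replaces every supremum over the grid $\{0,\tfrac1d,\dots,1\}$, on which $\dissto_d(t,\cdot)$ is merely a step function, by a supremum of honest continuous fields over all of $[0,1]$.

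The plan is to derive both (I) and (II) from one Gaussian-field lemma: \emph{if $(G_n)$ are centered Gaussian fields on $[0,T]\times[0,1]$, with $z=(t,v)$ a generic point, such that $\eta_n:=\sup_z\V(G_n(z))\to 0$ at rate $O(1/n)$ and a single modulus bound $\V\bigl(G_n(z)-G_n(z')\bigr)\le C\bigl(\sqrt{|t-t'|}+|v-v'|\bigr)$ holds for all $n$, then $\sup_z|G_n(z)|\to0$ a.s.} Indeed, in the canonical metric $\rho_n$ of $G_n$ the diameter of the parameter set is $\le 2\sqrt{\eta_n}\to0$, while $\rho_n$ is dominated by the fixed metric $\rho_*(z,z')=\sqrt C\,\bigl(\sqrt{|t-t'|}+|v-v'|\bigr)^{1/2}$, whose $\varepsilon$-covering numbers grow only polynomially ($\lesssim\varepsilon^{-6}$, as covering $[0,T]$ to scale $\varepsilon^4$ and $[0,1]$ to scale $\varepsilon^2$); hence Dudley's entropy bound gives $\E\sup_z|G_n(z)-G_n(z_0)|\lesssim\int_0^{2\sqrt{\eta_n}}\sqrt{\log N(\rho_*,\varepsilon)}\,\d\varepsilon\to0$, which with $\E|G_n(z_0)|\lesssim\sqrt{\eta_n}$ yields $\E\sup_z|G_n(z)|\to0$. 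The Borell--TIS concentration inequality then gives $\P\bigl(\sup_z|G_n|>u\bigr)\le\exp\bigl(-u^2/(8\eta_n)\bigr)$ for all large $n$, summable since $\eta_n=O(1/n)$, and Borel--Cantelli finishes it. For (II) one applies this to $R_N$: the variance bounds give $\eta_N\lesssim\sigma^2\sum_{k>N}k^{-2}\lesssim\sigma^2/N$, and $\V\bigl(R_N(z)-R_N(z')\bigr)$ is dominated by the corresponding quantity for the full series $S$, for which the required modulus $\lesssim\sigma^2(\sqrt{|t-t'|}+|v-v'|)$ is the standard bound obtained by splitting the $k$-sum at $k\asymp|t-t'|^{-1/2}$ and at $k\asymp|v-v'|^{-1}$ (using $\V(\xi_k)\lesssim k^{-2}$). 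For (I) one applies it to $\widetilde S_d$: here $\eta_d\lesssim\sigma^2\sum_{k<d}k^2/d^4\lesssim\sigma^2/d$, and the modulus bound comes from refined per-mode increment estimates $\V\bigl((\xi^{(d)}_k-\xi_k)(t)-(\xi^{(d)}_k-\xi_k)(t')\bigr)\lesssim\min\{k^6|t-t'|^2,\,k^2\}/d^4$ and its spatial analogue $\lesssim k^2\min\{k^2|v-v'|^2,\,1\}/d^4$, summed over $k<d$ by interpolating the two entries of each minimum with exponents chosen so that all powers of $d$ cancel.

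The step I expect to be the main obstacle is precisely this last one: obtaining the modulus bound for $\widetilde S_d$ \emph{uniformly in $d$}. Cruder estimates of the $\sup$-norm — an $\ell^1$-bound on the Fourier coefficients, or a chaining argument that uses only the $n$-uniform modulus while ignoring that the pointwise variance of $\widetilde S_d$ is $O(1/d)$ — lose a factor $\sqrt{\log d}$ or worse and do not close. The decomposition is engineered so that each piece is a Gaussian field that is \emph{globally of order $d^{-1/2}$} yet \emph{locally no rougher than the limiting field $S$ itself}, and it is exactly this pairing of vanishing global size with a uniformly controlled local modulus that makes Dudley's integral collapse as $d\to\infty$. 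Everything else — the mode-by-mode convergence, the variance bookkeeping, and the elementary covering-number estimates for $\rho_*$ — should be routine.
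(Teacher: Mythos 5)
Your convergence argument is correct, but it takes a genuinely different route from the paper's. The paper works with the step function $\dissto_d$ directly: it proves the pointwise bound $\V[\dissto_d(t,v)-S(t,v)]\le c/d$ (via a three-way split into the eigenfunction discrepancy $\phi_{k,d}-\phi_k$, the Ornstein--Uhlenbeck discrepancy $w_{k,d}-w_k$, and the tail $k\ge d$ --- parallel to, but not identical with, your decomposition into $\widetilde S_d$, $R_{d-1}$ and the spatial oscillation of $S$), establishes the moduli $\V[S(t,v)-S(t',v)]\le c|t-t'|^{1/2}$, $\V[S(t,v)-S(t,v')]\le c|v-v'|$ and $\V[\dissto_d(t,v)-\dissto_d(t',v)]\le c|t-t'|^{1/2}$, and then discretizes $[0,T]\times[0,1]$ into a $(d+1)\times(d+1)$ grid: on the grid it combines the pointwise variance bound with a union bound, the time-oscillation of $\dissto_d$ between grid points is controlled by a one-parameter large-deviation lemma (Sudakov--Fernique comparison with fractional Brownian motion plus concentration about the median), and the remaining oscillation of $S$ is absorbed by its a.s.\ uniform continuity. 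You replace the grid and union bound by a single entropy-plus-concentration lemma (Dudley and Borell--TIS) applied to the genuinely continuous fields $\widetilde S_d$ and $R_N$. The price is that you need per-mode increment estimates for $\xi^{(d)}_k-\xi_k$ in both variables, which the paper never requires; I checked that your claimed bounds $\min\{k^6|t-t'|^2,k^2\}/d^4$ and $k^2\min\{k^2|v-v'|^2,1\}/d^4$ do hold and do interpolate (with exponents $1/4$ and $1/2$ respectively) to a $d$-uniform modulus $C(\sqrt{|t-t'|}+|v-v'|)$, so the mechanism you single out as the main obstacle does close. The payoff is a more modular argument in which ``vanishing global variance with uniformly controlled local modulus'' is structural rather than hidden in a grid count; the paper's version is more elementary and avoids two-parameter chaining.

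One genuine omission: the theorem also asserts that the displayed series is an explicit solution of the system \eqref{eqn:system1sdes}--\eqref{eqn:stochsystem0}, and you only observe that the boundary and initial conditions are built in. The paper proves this claim separately (Lemma~\ref{lem:stolem2}) by writing the solution as $\sqrt{d}\,\sigma\, e^{d^2\MA t}\int_0^t e^{-d^2\MA u}\,\d\bar B^d_u$ and diagonalizing $\MA=Q\Lambda Q^\top$; this is also the only place where the specific coupling $(B^{1,d}_t,\ldots,B^{d-1,d}_t)^\top=Q(B^1_t,\ldots,B^{d-1}_t)^\top$ enters, and that coupling is what makes your $\xi^{(d)}_k$ and $\xi_k$ be driven by the same $B^k$ --- the fact your whole estimate of $\widetilde S_d$ rests on. You should add this (routine) verification; without it the identification of $\dissto_d$ with your explicit series is unproved.
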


The proof of Theorem~\ref{thm:stolem2} is given in Lemmas~\ref{lem:asconv} and~\ref{lem:stolem2} below.

\subsection{Connections to stochastic heat equation}
\label{sec:sheconnection}

Let us now discuss connections between our limiting processes and the stochastic heat equation (SHE).

We start from the process $S$ appearing in Theorem \ref{thm:stolem2} and claim that $(S(t,v))$ is a solution to a formal stochastic heat equation
with trivial initial and boundary conditions
\begin{align}   \label{continuous_problem_S}
   \d  S_t   &=   \Delta S_t \d t +\sigma \d B_t,
\\  \nonumber
  S(t,0)&= 0,\qquad  S(t,1)=0, \qquad S(0,v)=0,
\end{align}
where $(B_t)$ is an infinite-dimensional Brownian motion.

To express this fact precisely, recall that the system of functions
$\psi_k(v):=\sqrt{2}\sin(\pi k v)$, with $k=1,2,...$, is an orthonormal base in $L_2[0,1]$ solving Dirichlet problem,
\begin{eqnarray*}
    \psi_k''(v) &=&-\theta_k\,\psi_k(v),
\\
     \psi_k(0)&=&\psi_k(1)=0,
\end{eqnarray*}
where the eigenvalues are given by $\theta_k=\pi^2k^2$.

Let us formally write the infinite-dimensional Brownian motion  taking values in $L_2[0,1]$ in a coordinate form
\begin{equation} \label{eqn:B_coordinate}
    B_t = \sum_{k=1}^\infty B_t^k \psi_k,
\end{equation}
where $(B_t^k)_{t\ge 0}$ are independent real-valued Brownian motions. 
Let us also represent the solution $S$ of the system \eqref{continuous_problem_S} in the same coordinate form:
\begin{equation} \label{eqn:S_coordinate}
    S(t,v)=  \sum_{k=1}^\infty  S^k(t)\psi_k(v).
\end{equation}
Then the system \eqref{continuous_problem_S} becomes
\begin{eqnarray*}
   \d S^k(t) &=& -\theta_k S^k(t) \d t+ \sigma \d B_t^k, \qquad k=1,2,...
\\
   S^k(0) &=& 0.
\end{eqnarray*}
This is a classical SDE solved by a non-stationary version of Ornstein--Uhlenbeck process
\[
   S^k(t) = \sigma \int_0^t e^{-(t-u)\theta_k} \d B_t^k.
\]
Now \eqref{eqn:S_coordinate} justifies  that $S$ given in \eqref{eqn:S} solves the system \eqref{continuous_problem_S}.

One should however stress that the representation \eqref{eqn:B_coordinate}  is rather formal because the sum does not converge for fixed $t$ and $v$. 
\medskip

Next, by using the properties of the base $(\psi_k)$ it is easy to check that the function $D$ defined in
\eqref{eqn:D} satisfies the deterministic heat PDE with boundary conditions:
\begin{align*} 
   \d D_t  &=   \Delta D_t \d t,
\\  \nonumber
  D(t,0)&= 0,\qquad  D(t,1)=1+\eps t, \qquad D(0,v)=v.
\end{align*}
Therefore, by summing up the results for $D$ and $S$, we see that the limiting process $X=D+S$ from Theorem 
\ref{thm:mainthm} solves the SPDE \eqref{continuous_problem_X}.


\section{Proofs} \label{sec:proofs}

\subsection{Outline}
We are going to show that the solution of the `deterministic system' converges to $D$. By `deterministic system', we mean the system (\ref{eqn:dissystem-re}) with $\sigma=0$. This is the purpose of Subsection~\ref{sec:detsystemcon}.
Further, we will show that the  the solution of `stochastic system' converges to $S$. By `stochastic system', we mean (\ref{eqn:system1sdes}) and (\ref{eqn:stochsystem0}).
This is the purpose of Subsections~\ref{sec:convsto}--\ref{ss:asconvergence}.
The solution of the system (\ref{eqn:dissystem-re}) is the sum of the `deterministic system' solution and that
of the `stochastic system', so that we will have shown the convergence of this sum to $D+S=X$. The two auxiliary sections \ref{sec:aux1} and \ref{sec:aux2} set up preparatory facts for deterministic and stochastic parts, respectively.

\subsection{Preliminaries for the determnistic case}
\label{sec:aux1}
We start with a few preparations. The first goal is to obtain the explicit eigenvalues of the matrix in (\ref{eqn:defnofA}).

\begin{lem}
\label{lem:eigenvectors}
The eigenvalues of $\MA$ defined in  \eqref{eqn:defnofA} are given by $\lambda_k:= - 2(1-\cos(k\pi/d))$ and the coordinates
of the corresponding eigenvectors $f_k$, $k=1,\ldots,d-1$, are given by $f_k^m:=\sin(k m\pi/d), m=1,\ldots,d-1$. Further, the vectors $\sqrt{2/d}\cdot f_k$, $k=1,\ldots,d-1$, are orthonormal.
\end{lem}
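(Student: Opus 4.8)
The plan is to verify the claimed eigenpairs by a direct computation and then deduce orthonormality from the symmetry of $\MA$ together with an elementary trigonometric sum.

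First I would fix $k\in\{1,\dots,d-1\}$ and check that $\MA f_k=\lambda_k f_k$ coordinate by coordinate. It is convenient to extend $f_k$ by the natural boundary values $f_k^0:=\sin(0)=0$ and $f_k^d:=\sin(k\pi)=0$; then the $m$-th coordinate of $\MA f_k$ equals $f_k^{m+1}-2f_k^m+f_k^{m-1}$ for \emph{every} $m=1,\dots,d-1$, the first row ($m=1$) and the last row ($m=d-1$) being covered automatically precisely because these boundary values vanish. The product-to-sum identity $\sin\!\big((m+1)k\pi/d\big)+\sin\!\big((m-1)k\pi/d\big)=2\cos(k\pi/d)\sin(mk\pi/d)$ gives $f_k^{m+1}+f_k^{m-1}=2\cos(k\pi/d)\,f_k^m$, so the $m$-th coordinate of $\MA f_k$ is $\big(2\cos(k\pi/d)-2\big)f_k^m=\lambda_k f_k^m$, as claimed. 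Moreover $f_k^1=\sin(k\pi/d)\neq 0$ for $0<k<d$, so each $f_k$ is a genuine, nonzero eigenvector.

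Next I would establish orthogonality. Since $\MA$ is real and symmetric, eigenvectors belonging to distinct eigenvalues are automatically orthogonal, and the eigenvalues $\lambda_k=-2(1-\cos(k\pi/d))$, $k=1,\dots,d-1$, are pairwise distinct because $t\mapsto\cos t$ is strictly decreasing on $[0,\pi]$ and the arguments $k\pi/d$ all lie in $(0,\pi)$. Hence $\{f_k\}_{k=1}^{d-1}$ is an orthogonal family of $d-1$ nonzero vectors in $\R^{d-1}$, so it is a basis and $\MA$ is diagonalized as in \eqref{eqn:defnofQ} once the normalization is confirmed.

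Finally I would compute the normalizing constant. Writing $\sin^2(mk\pi/d)=\tfrac12\big(1-\cos(2mk\pi/d)\big)$ and summing over $m=1,\dots,d-1$ reduces the task to evaluating $\sum_{m=1}^{d-1}\cos(2mk\pi/d)$; since $1\le k\le d-1$, the sum of the $d$-th roots of unity vanishes, $\sum_{m=0}^{d-1}e^{2\pi i mk/d}=0$, and taking real parts yields $\sum_{m=1}^{d-1}\cos(2mk\pi/d)=-1$. Therefore $\|f_k\|^2=\tfrac{d-1}{2}+\tfrac12=\tfrac d2$, so $\sqrt{2/d}\,f_k$ has unit norm, and combined with the orthogonality above the vectors $\sqrt{2/d}\,f_k$, $k=1,\dots,d-1$, are orthonormal. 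Everything here is routine; the only points needing a little care are the handling of the first and last rows of $\MA$ via the convention $f_k^0=f_k^d=0$ and the evaluation of the cosine sum in the normalization, neither of which constitutes a genuine obstacle.
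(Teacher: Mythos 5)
Your proof is correct and follows essentially the same route as the paper's: a direct coordinate-wise verification of the eigenvector equation (you use the product-to-sum identity where the paper takes imaginary parts of the analogous identity for complex exponentials $e^{ikm\pi/d}$, but this is the same computation), orthogonality from symmetry of $\MA$ plus distinctness of the eigenvalues, and the same cosine-sum evaluation for the normalization. Your added remarks that the eigenvalues are pairwise distinct and that $f_k^1\neq 0$ make explicit two small points the paper leaves implicit.
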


\begin{proof}
Set $g_k^m:=e^{i km\pi /d}$, $m=0,\ldots,d$. Then for $m=1,\ldots,d-1$, we have
\[
    g_k^{m+1}-2g_k^m + g_k^{m-1} = (e^{ik\pi/d} - 2 + e^{-ik\pi /d}) g_k^m =  - 2 (1-\cos(k\pi/d)) g_k^m = \lambda_k g_k^m.
\]
By taking the imaginary part of this identity, we obtain
\[
    \sin( k (m+1)\pi/d ) - 2\sin( km\pi/d )  +\sin( k (m-1)\pi/d )  = \lambda_k \sin( k m\pi/d ) .
\]
Taking into account that $\sin(0 k\pi/d)=0=\sin(d k\pi/d)$, we obtain the assertion $\MA f_k=\lambda_k f_k$.

 Since $f_k$ are the eigenvectors for distinct eigenvalues of a symmetric matrix, they are orthogonal.
 In order to see that $\sqrt{2/d}$ is the correct normalization,
just note that
\begin{eqnarray*}
  \sum_{m=1}^{d-1} (f_k^m)^2 &=&  \sum_{m=0}^{d-1} \sin(mk\pi/d)^2 = \frac{1}{2}\sum_{m=0}^{d-1} (1-\cos(2mk\pi/d))
\\
   &=&\frac{d}{2} - \frac{1}{2}\sum_{m=0}^{d-1} \cos(2mk\pi/d) = \frac{d}{2}\,.
\end{eqnarray*}

\end{proof}

\subsection{Convergence of the deterministic system solutions} \label{sec:detsystemcon}

We start with a representation of the solution of the deterministic system (i.e.\ system (\ref{eqn:dissystem-re}) with $\sigma=0$).

\begin{lem}
\label{lem:detsys1}
Let $(\disdet(t,\frac{i}{d}))_{i=0,\ldots,d;t\geq 0}$ be the solution of the system
\begin{align} \label{eqn:discreteproblem-re}
   \d \disdet(t,\frac{i}{d}) &= d^2 \left(
         \disdet\left(t, \frac{i+1}{d}\right) -2\disdet\left(t, \frac{i}{d}\right)+\disdet\left(t, \frac{i-1}{d}\right)
                                   \right) \d t,\quad i=1,\ldots,d-1;
\\  \nonumber
  \disdet(t,0)&= 0,\qquad \disdet(t,1)=1+\eps t,
  \qquad \disdet\left(0,\frac{i}{d}\right)=\frac id,\quad i=1,\ldots,d-1.
  \end{align}
Then the following explicit representation holds:
\begin{equation} \label{eqn:discrete_determ_solution}
  \disdet(t,\frac{i}{d}) = \frac{i}{d}(\eps t +1) + \eps \left( \frac{h^i}{d^2} -  \left[ e^{d^2 t \AA } \frac{1}{d^2} h \right]^i \right),
\end{equation}
where $h=(h^1,\ldots,h^d)^\top$, $h^i = \frac{i}{6d} (i^2-d^2)$.
\end{lem}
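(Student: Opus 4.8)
The plan is to turn the inhomogeneous linear system \eqref{eqn:discreteproblem-re} into a homogeneous constant-coefficient ODE by subtracting a time-dependent linear interpolation profile, and then to solve it explicitly using that $\AA$ is invertible (which follows from Lemma~\ref{lem:eigenvectors}).

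First I would collect the interior values into the vector $Y_t := (\disdet(t,1/d),\dots,\disdet(t,(d-1)/d))^\top\in\R^{d-1}$. Writing out \eqref{eqn:discreteproblem-re} and isolating the boundary contributions -- the value $\disdet(t,0)=0$ enters the equation for $i=1$ trivially, whereas $\disdet(t,1)=1+\eps t$ enters the equation for $i=d-1$ -- one obtains
\[
   \dot Y_t = d^2 \AA Y_t + d^2(1+\eps t)\,e_{d-1},\qquad Y_0=(1/d,\dots,(d-1)/d)^\top,
\]
where $\AA$ is the matrix \eqref{eqn:defnofA} and $e_{d-1}\in\R^{d-1}$ is the last standard basis vector. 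Next I introduce the linear profile $R_t\in\R^{d-1}$, $R_t^i:=\tfrac{i}{d}(1+\eps t)$. A one-line computation gives $(\AA R_t)^i=0$ for $i\le d-2$ and $(\AA R_t)^{d-1}=-(1+\eps t)$, so that $d^2\AA R_t=-d^2(1+\eps t)\,e_{d-1}$ cancels the forcing term exactly, while $\dot R_t=\eps w$ with $w^i:=\tfrac id$. Since also $Y_0=R_0$, the difference $W_t:=Y_t-R_t$ solves the homogeneous problem
\[
   \dot W_t = d^2\AA W_t - \eps w,\qquad W_0=0.
\]

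By Lemma~\ref{lem:eigenvectors} the eigenvalues $\lambda_k=-2(1-\cos(k\pi/d))$, $k=1,\dots,d-1$, are all nonzero, hence $\AA$ is invertible, and the unique solution of the last display is $W_t=\tfrac{\eps}{d^2}\bigl(I-e^{d^2 t\AA}\bigr)\AA^{-1}w$ (constant particular solution $\tfrac{\eps}{d^2}\AA^{-1}w$ plus the matching homogeneous term). It remains to check $\AA^{-1}w=h$, i.e.\ $\AA h=w$: with $h^i=\tfrac{1}{6d}(i^3-d^2 i)$ the identity $(i+1)^3-2i^3+(i-1)^3=6i$ yields $h^{i+1}-2h^i+h^{i-1}=\tfrac id$ for all $1\le i\le d-1$, and because $h^0=h^d=0$ the truncated first and last rows of $\AA$ produce no discrepancy. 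Substituting back, $\disdet(t,i/d)=R_t^i+W_t^i=\tfrac id(\eps t+1)+\eps\bigl(\tfrac{h^i}{d^2}-[e^{d^2 t\AA}\tfrac1{d^2}h]^i\bigr)$, which is \eqref{eqn:discrete_determ_solution}.

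No step presents a real difficulty: the system is linear with bounded coefficients, so existence and uniqueness are automatic, and each computation is short. The only points requiring care are the bookkeeping of the two boundary values -- getting the forcing vector $d^2(1+\eps t)e_{d-1}$ right and verifying that $\AA$ applied to the linear profile reproduces it with the opposite sign -- and the algebraic identity $\AA h=w$, where one has to keep in mind that $h^0=h^d=0$ so that the truncated rows of $\AA$ act on $h$ as if it were extended by zero at both ends.
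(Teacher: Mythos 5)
Your proof is correct, but it takes a different route from the paper. The paper disposes of this lemma in two lines: it observes that the unscaled system \eqref{eqn:system1} with $\sigma=0$ has an explicit solution given in Lemmas~4 and~5 of the companion paper \cite{ABL1}, and that the rescaling $\disdet(t,\tfrac id)=d^{-1}X^i_{d^2t}$ carries that representation directly into \eqref{eqn:discrete_determ_solution}. You instead rederive the formula from scratch: you reduce the inhomogeneous system to $\dot W_t=d^2\AA W_t-\eps w$ by subtracting the moving linear profile $R_t^i=\tfrac id(1+\eps t)$ (correctly accounting for the boundary forcing $d^2(1+\eps t)e_{d-1}$ and for $\AA R_t=-(1+\eps t)e_{d-1}$), and then identify the particular solution via the discrete identity $\AA h=w$, which follows from $(i+1)^3-2i^3+(i-1)^3=6i$ together with $h^0=h^d=0$; all of these computations check out, including the invertibility of $\AA$ from Lemma~\ref{lem:eigenvectors}. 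What your version buys is self-containedness and an explanation of \emph{why} the cubic $h^i=\tfrac{i}{6d}(i^2-d^2)$ appears (it is the discrete solution of $\AA h=w$, the lattice analogue of $h''(v)=v$ with zero boundary values); what the paper's version buys is brevity, at the cost of sending the reader to \cite{ABL1}.
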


\begin{proof}
Let $(X^i_t)_{i=0,\ldots,d;t\geq 0}$ be the solution of the  system (\ref{eqn:system1}) with $\sigma=0$.
A representation of the solution of the system (\ref{eqn:system1}) is given in \cite[Lemma 4 and Lemma 5]{ABL1}.
One can easily see that $\disdet(t,\frac{i}{d}):=d^{-1}X^i_{d^2t}$ is given by \eqref{eqn:discrete_determ_solution}.
On the other hand, $\disdet(t,\frac{i}{d})$ is the solution of (\ref{eqn:discreteproblem-re}).
\end{proof}

\begin{lem} \label{lem:alternativedet}
Under the assumptions of Lemma~\ref{lem:detsys1}, we have the following alternative explicit representation:
\[
  \disdet(t,\frac{i}{d}) = \frac{i}{d}(\eps t +1) + \eps \left( \frac{h^i}{d^2} - \frac{2}{d^3}
  \sum_{k=1}^{d-1} \left[ \left( \sum_{m=1}^{d-1} h^m f_k^m\right) \cdot e^{d^2 t \lambda_k} \cdot f_k^i\right]  \right),
\]
where $h=(h^1,\ldots,h^d)^\top$, $h^i = \frac{i}{6d} (i^2-d^2)$, and $f_k^m=\sin(km\pi/d)$.
\end{lem}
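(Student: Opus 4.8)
The plan is to start from the representation in Lemma~\ref{lem:detsys1}, namely
\[
  \disdet(t,\tfrac{i}{d}) = \frac{i}{d}(\eps t +1) + \eps \left( \frac{h^i}{d^2} -  \left[ e^{d^2 t \AA } \tfrac{1}{d^2} h \right]^i \right),
\]
and simply to evaluate the matrix exponential $e^{d^2 t \AA}$ explicitly using the spectral decomposition of $\AA$ provided by Lemma~\ref{lem:eigenvectors}. By that lemma we may write $\AA = Q\Lambda Q^\top$ with $Q_{j,k} = \sqrt{2/d}\, f_k^j$ and $\Lambda = \diag(\lambda_1,\dots,\lambda_{d-1})$, and since $Q$ is orthonormal we get $e^{d^2 t \AA} = Q\, e^{d^2 t \Lambda}\, Q^\top$, so that
\[
  \left[ e^{d^2 t \AA} h \right]^i = \sum_{k=1}^{d-1} e^{d^2 t \lambda_k}\, (Q^\top h)_k\, Q_{i,k}
  = \frac{2}{d} \sum_{k=1}^{d-1} \left( \sum_{m=1}^{d-1} h^m f_k^m \right) e^{d^2 t \lambda_k}\, f_k^i .
\]

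Substituting this into the formula from Lemma~\ref{lem:detsys1}, and absorbing the extra factor $\tfrac{1}{d^2}$ from the term $\tfrac{1}{d^2}h$, we get exactly
\[
  \left[ e^{d^2 t \AA} \tfrac{1}{d^2} h \right]^i = \frac{2}{d^3} \sum_{k=1}^{d-1} \left( \sum_{m=1}^{d-1} h^m f_k^m \right) e^{d^2 t \lambda_k}\, f_k^i ,
\]
which yields the claimed alternative representation. I would present the argument in that order: quote Lemma~\ref{lem:detsys1}, invoke the diagonalization $\AA = Q\Lambda Q^\top$ from Lemma~\ref{lem:eigenvectors}, expand the matrix exponential coordinate-wise, and collect terms.

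There is essentially no obstacle here; the only point requiring a little care is bookkeeping of the normalization constants — the $\sqrt{2/d}$ appears twice in $Q\cdot Q^\top$, producing the $2/d$, and combined with the $1/d^2$ this gives the $2/d^3$ in front of the sum, while $h^i = \frac{i}{6d}(i^2-d^2)$ and $f_k^m = \sin(km\pi/d)$ are unchanged. One should also note that $h$ has $d$ components in Lemma~\ref{lem:detsys1} but only the components $m=1,\dots,d-1$ enter the sum because $f_k^d = \sin(k\pi) = 0$ (equivalently, the matrix $\AA$ and the vectors $f_k$ live in $\R^{d-1}$), so restricting the inner sum to $m=1,\dots,d-1$ is harmless. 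No convergence or analytic estimates are needed — this is a purely algebraic reformulation of the previous lemma.
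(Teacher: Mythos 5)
Your proposal is correct and follows essentially the same route as the paper: both start from Lemma~\ref{lem:detsys1}, use the orthonormal diagonalization $\MA=Q\Lambda Q^\top$ from Lemma~\ref{lem:eigenvectors} to write $e^{d^2t\MA}=Qe^{d^2t\Lambda}Q^\top$, and expand the $i$-th coordinate to produce the factor $\tfrac{2}{d}$, which combines with $\tfrac{1}{d^2}$ to give $\tfrac{2}{d^3}$. The bookkeeping of the normalization constants matches the paper's computation exactly.
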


\begin{proof}
We compute, using the diagonalization of $\MA$ introduced in (\ref{eqn:defnofQ}):

\begin{eqnarray*}
   \left[ e^{d^2 t \MA} h \right]^i
   = \left(e^{d^2 t \MA}  h \right)^\top e_i
   &=& \left(Q e^{d^2 t \Lambda} Q^\top   h \right)^\top e_i
\\
   &=&   h^\top Q e^{d^2 t \Lambda} Q^\top e_i
\\
   &=&  \sqrt{\frac{2}{d}}\, \sum_{k=1}^{d-1} h^\top Q e^{d^2 t \Lambda} f_k^i e_k
\\
  &=& \sqrt{\frac{2}{d}}\,\sum_{k=1}^{d-1} h^\top Q e_k e^{d^2 t \lambda_k} f_k^i
\\
  &=&\frac{2}{d} \sum_{k=1}^{d-1} h^\top f_k e^{d^2 t \lambda_k} f_k^i
\\
  &=&\frac{2}{d} \sum_{k=1}^{d-1} \left( \sum_{m=1}^{d-1} h^m f_k^m\right) e^{d^2 t \lambda_k} f_k^i,
\end{eqnarray*}
as claimed.
Here, we used $Q^\top e_i = \sqrt{2/d}\sum_{k=1}^{d-1} f_k^i e_k$ in the fourth step,
$Q e_k=\sqrt{2/d}\cdot f_k$ in the sixth step,
and $h^\top f_k=\sum_{m=1}^{d-1} h^m f_k^m$ in the seventh step.
\end{proof}

\begin{lem} \label{lem:seven}
The solution of the system (\ref{eqn:discreteproblem-re}) converges, as $d\to\infty$, to
\[
      \condet(t,v) = v(\eps t +1) + \eps \left( \frac{v}{6} (v^2-1)
      - \sum_{k=1}^{\infty}  c_k  \cdot e^{- k^2 \pi^2 t} \cdot \sqrt{2} \sin(k\pi v)  \right),
\]
where
\[
  c_k:=\int_0^1 \frac{x}{6} (x^2-1) \sqrt{2} \sin(x k \pi) \d x.
\]
\end{lem}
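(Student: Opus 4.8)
The plan is to start from the explicit formula of Lemma~\ref{lem:alternativedet} and let $d\to\infty$ summand by summand, with a uniform-in-$d$ tail estimate to control the infinite series; this will in fact yield uniform convergence on $[0,T]\times[0,1]$ for every $T>0$, which is the form of convergence needed in the sequel. Put $i=\lfloor vd\rfloor$ and note that $h^m/d^2=h(m/d)$ with $h(v)=\tfrac16 v(v^2-1)$, so that Lemma~\ref{lem:alternativedet} reads
\[
  \disdet\Big(t,\tfrac{\lfloor vd\rfloor}{d}\Big)=\tfrac{\lfloor vd\rfloor}{d}(\eps t+1)+\eps\Big(h\big(\tfrac{\lfloor vd\rfloor}{d}\big)-\sum_{k=1}^{d-1}a_k^{(d)}\,e^{d^2 t\lambda_k}\,\sin\!\big(\tfrac{k\lfloor vd\rfloor\pi}{d}\big)\Big),\quad a_k^{(d)}:=\frac2d\sum_{m=1}^{d-1}h\big(\tfrac md\big)\sin\!\big(\tfrac{km\pi}{d}\big).
\]
The first two summands converge uniformly on $[0,1]$ to $v(\eps t+1)$ and $\eps h(v)$, since $h$ is continuous and $\lfloor vd\rfloor/d\to v$ uniformly; so everything reduces to identifying the limit of the series.

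For each fixed $k$ one has, as $d\to\infty$: $a_k^{(d)}\to 2\int_0^1 h(v)\sin(k\pi v)\d v=\sqrt2\,c_k$, being a scaled Riemann sum of a continuous function; $d^2\lambda_k=-2d^2(1-\cos(k\pi/d))\to-\pi^2 k^2$, whence $e^{d^2 t\lambda_k}\to e^{-\pi^2 k^2 t}$ uniformly on $t\in[0,T]$; and $|\sin(k\lfloor vd\rfloor\pi/d)-\sin(k\pi v)|\le k\pi/d\to0$. Consequently, for each fixed $K$ the partial sum $\sum_{k\le K}a_k^{(d)}e^{d^2t\lambda_k}\sin(k\lfloor vd\rfloor\pi/d)$ converges, uniformly on $[0,T]\times[0,1]$, to $\sum_{k\le K}\sqrt2\,c_k e^{-\pi^2 k^2 t}\sin(k\pi v)$.

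The crux, which I expect to be the main obstacle, is a bound $|a_k^{(d)}|\le C/k^3$ that is uniform in $d$ and $1\le k\le d-1$: together with $|e^{d^2 t\lambda_k}|\le1$ this makes the tails $\sum_{k>K}$ of both the discrete series and the limiting series uniformly small, and letting $K\to\infty$ (after $d\to\infty$) completes the proof. I would obtain this bound by discrete summation by parts. Since $\AA$ is symmetric with $\AA f_k=\lambda_k f_k$ by Lemma~\ref{lem:eigenvectors}, and since $h$ is a cubic polynomial vanishing at $0$ and $1$ with $h''(v)=v$, the vector $u:=(h(1/d),\dots,h((d-1)/d))^\top$ satisfies $\AA u=d^{-3}(1,2,\dots,d-1)^\top$ exactly, whence
\[
  \sum_{m=1}^{d-1}h\big(\tfrac md\big)\sin\!\big(\tfrac{km\pi}{d}\big)=\langle u,f_k\rangle=\lambda_k^{-1}\langle\AA u,f_k\rangle=\frac1{\lambda_k d^3}\sum_{m=1}^{d-1}m\,\sin\!\big(\tfrac{km\pi}{d}\big).
\]
The remaining sum has a closed form of geometric type giving $\big|\sum_{m=1}^{d-1}m\sin(km\pi/d)\big|\le d^2/k$ (via $|1-e^{ik\pi/d}|\ge 2k/d$), and combined with $|\lambda_k|=4\sin^2(k\pi/2d)\ge4k^2/d^2$ this yields $|a_k^{(d)}|\le\tfrac1{2k^3}$. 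As $c_k=\sqrt2\,(-1)^k/(\pi^3 k^3)$ decays at the same rate, the tail estimate applies and the convergence is proved.
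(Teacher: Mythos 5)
Your argument is correct and starts from the same point as the paper's proof --- the spectral representation of Lemma~\ref{lem:alternativedet} --- but it is substantially more complete. The paper's proof consists only of the three termwise limits you also record ($d^{-3}\sum_m h^m f_k^m\to\int_0^1 \frac{x}{6}(x^2-1)\sin(k\pi x)\d x$ as a Riemann sum, $d^2\lambda_k\to-\pi^2k^2$, and $f_k^{\lfloor vd\rfloor}\to\sin(k\pi v)$) and leaves the interchange of $d\to\infty$ with the summation over $k$ entirely implicit. You correctly identify that interchange as the real content of the lemma and close it with a uniform-in-$d$ tail bound $|a_k^{(d)}|\le Ck^{-3}$. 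Your derivation of that bound is sound: since $h$ is a cubic with $h(0)=h(1)=0$ and $h''(v)=v$, the second central difference of $h$ is exact, so $\AA u=d^{-3}(1,\dots,d-1)^\top$; self-adjointness then gives $\langle u,f_k\rangle=\lambda_k^{-1}\langle \AA u,f_k\rangle$, and the geometric-sum estimate $\bigl|\sum_m m\sin(km\pi/d)\bigr|\le Cd^2/k$ combined with $|\lambda_k|=4\sin^2(k\pi/2d)\ge 4k^2/d^2$ yields the claim. (Your explicit constants are slightly optimistic --- for odd $k$ the geometric sum bound carries a factor $(2+\pi)/4>1$ --- but only an absolute constant is needed, so this is immaterial.) As a bonus, your argument delivers uniform convergence on $[0,T]\times[0,1]$ rather than the unspecified, in effect pointwise, convergence asserted in the paper, and uniform convergence of the deterministic part is what is actually used when Lemma~\ref{lem:seven} is combined with Theorem~\ref{thm:stolem2} to deduce Theorem~\ref{thm:mainthm}.
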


\begin{proof} We use the representation in Lemma~\ref{lem:alternativedet}.
Note that
\[
    \frac{1}{d^3} \sum_{m=1}^{d-1} h^m f_k^m = \frac{1}{d}\sum_{m=1}^{d-1} \frac{m}{6d}
    \left( \left( \frac{m}{d}\right)^2 -1 \right) \sin\left(\frac{m}{d} k \pi \right)
    \to \int_0^1 \frac{x}{6} ( x^2-1) \sin(xk\pi) \d x.
\]
Further,
\[
    d^2 \lambda_k = -2d^2 ( 1 - \cos( k\pi/d)) \to -k^2 \pi^2
\]
and
\[
    f_k^i =\sin(k i \pi / d) \to  \sin( k \pi v).
\]
\end{proof}
\subsection{Preliminaries for the stochastic case} \label{sec:aux2}
Here, we collect some more tools that are needed to prove the convergence of the stochastic system
solution.


\begin{lem} \label{lem:varest}
Let
$$
    w_k := \int_0^t e^{-k^2 \pi^2 (t-u)} \d B_u^k \qquad \text{and}\qquad w_{k,d}:=\int_0^t e^{d^2 \lambda_k (t-u)} \d B_u^k.
$$
Then there is an absolute constant $c>0$ such that
$$
     \V [w_k]\leq c k^{-2} \qquad \text{and}\qquad \V [w_{k,d}] \leq c k^{-2}.
$$
\end{lem}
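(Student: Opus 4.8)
The plan is to compute both variances explicitly using the Itô isometry and then bound each resulting integral. For the first quantity, the Itô isometry gives
\[
  \V[w_k] = \int_0^t e^{-2k^2\pi^2(t-u)}\d u = \frac{1 - e^{-2k^2\pi^2 t}}{2k^2\pi^2} \leq \frac{1}{2\pi^2}\,k^{-2},
\]
so the first bound holds with, say, $c = 1/(2\pi^2)$ (or any larger absolute constant). The work is entirely in the second quantity, where the exponent involves $d^2\lambda_k$ rather than $-k^2\pi^2$; here one must control the ratio $d^2\lambda_k / (k^2\pi^2)$ uniformly in both $k$ and $d$.

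For $w_{k,d}$, the Itô isometry again yields
\[
  \V[w_{k,d}] = \int_0^t e^{2 d^2 \lambda_k (t-u)}\d u = \frac{1 - e^{2 d^2 \lambda_k t}}{-2 d^2 \lambda_k} \leq \frac{1}{-2 d^2 \lambda_k},
\]
using that $\lambda_k < 0$ for $k = 1,\ldots,d-1$ (recall $\lambda_k = -2(1-\cos(k\pi/d))$). So the whole task reduces to showing $-d^2\lambda_k \geq \mathrm{const}\cdot k^2$ for all $1 \le k \le d-1$, i.e. $d^2(1-\cos(k\pi/d)) \geq \mathrm{const}\cdot k^2$. The key elementary fact is the inequality $1 - \cos x \geq \tfrac{2}{\pi^2}x^2$ valid for $x \in [0,\pi]$ (this follows from the concavity of $1-\cos x$ on $[0,\pi]$, or from comparing with the chord); applying it with $x = k\pi/d \in (0,\pi)$ gives $1 - \cos(k\pi/d) \geq \tfrac{2}{\pi^2}\cdot k^2\pi^2/d^2 = 2k^2/d^2$, hence $-d^2\lambda_k = 2d^2(1-\cos(k\pi/d)) \geq 4k^2$. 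Therefore $\V[w_{k,d}] \leq \tfrac{1}{8}k^{-2}$, and taking $c$ to be the maximum of the two constants (or simply $c=1$) proves the lemma.

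The main obstacle — really the only non-routine point — is the uniform lower bound on $1-\cos(k\pi/d)$; one must take care that the argument $k\pi/d$ stays in $[0,\pi]$, which it does precisely because the index range is $k \le d-1$, so the elementary cosine inequality on $[0,\pi]$ is exactly what is needed and no asymptotic ($d\to\infty$) reasoning is required. Everything else is a direct application of the Itô isometry to a deterministic-integrand stochastic integral and monotonicity of the resulting elementary integrals.
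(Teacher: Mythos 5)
Your proof is correct and follows essentially the same route as the paper: It\^o isometry for both stochastic integrals, the trivial bound $1-e^{-a}\le 1$, and the elementary inequality $1-\cos x\ge c\,x^2$ on $[0,\pi]$ to get $-d^2\lambda_k\ge \mathrm{const}\cdot k^2$ uniformly in $d$. (One tiny caveat: $1-\cos x$ is not concave on all of $[0,\pi]$, so your parenthetical justification is off; the sharp bound $1-\cos x=2\sin^2(x/2)\ge 2x^2/\pi^2$ follows instead from Jordan's inequality $\sin y\ge 2y/\pi$ on $[0,\pi/2]$ --- but the inequality you use is correct, and the paper simply invokes it with an unspecified constant.)
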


\begin{proof} The proof of the first assertion is simple:
$$
     \V [w_k] = \int_0^t e^{-2k^2 \pi^2 (t-u) } \d u = \frac{1}{2 k^2 \pi^2} \left( 1 - e^{-2 k^2\pi^2 t}\right)
     \leq \frac{1}{2 k^2 \pi^2}.
$$
To see the second assertion, recall that there is a $c>0$ such that $\cos(u)\leq 1- c u^2$ for any $|u|\leq \pi$,
so that $-d^2\lambda_k=2 d^2 (1-\cos(\pi k/d))\geq 2c \pi^2 k^2$ for all $k$. This implies
$$
    \V [w_{k,d}] = \int_0^t e^{2 d^2 \lambda_k (t-u) } \d u
    = \frac{1}{-2 d^2 \lambda_k} \left( 1 - e^{2 d^2 \lambda_k t}\right) \leq \frac{1}{-2 d^2 \lambda_k}
    \leq \frac{1}{4c\pi^2k^2},
$$
for all $k$.
\end{proof}

We will also need the following large deviations result for Gaussian processes:

\begin{lem} \label{lem:ld-lemma}
 Let $\{Y_t, 0\le t \le T\}$ be a centered Gaussian process such that $Y_0=0$ and
for some $c>0,H\in(0,1]$
\[
  \E[(Y_{t_1}-Y_{t_2})^2] \le c |t_1-t_2|^{2H}, \qquad 0\le t_1,t_2\le T.
\]
Then there exist $c_1,c_2$ depending only on $c,H$ such that for all $r\ge 1$ it is true that
\[
   \P\left( \sup_{0\le t\le T} Y_t \ge c_1 r T^H \right) \le \exp\{-c_2 r^2\}.
\]
\end{lem}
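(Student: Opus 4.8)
The plan is to derive a subgaussian tail for $\sup_{0\le t\le T}Y_t$ by combining a chaining estimate for its mean with the Gaussian concentration inequality of Borell and Tsirelson--Ibragimov--Sudakov. As a preliminary remark, the increment bound together with the Kolmogorov--Chentsov criterion shows that $Y$ has a continuous modification, so I would assume from the start that $Y$ has continuous paths; then $\sup_{0\le t\le T}Y_t$ is a genuine random variable, equal a.s.\ to the supremum over the dyadic rationals in $[0,T]$.

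\emph{Step 1: bounding $\E\left[\sup_{0\le t\le T}Y_t\right]$.} Equip $[0,T]$ with the canonical metric $\rho(s,t):=\left(\E[(Y_s-Y_t)^2]\right)^{1/2}$, which by hypothesis satisfies $\rho(s,t)\le \sqrt{c}\,|s-t|^H$. Hence the $\rho$-diameter of $[0,T]$ is at most $\sqrt{c}\,T^H$, and a $\rho$-ball of radius $\eps$ contains an interval of length $(\eps/\sqrt{c})^{1/H}$, so the $\rho$-covering numbers obey $N(\eps)\le 1+T(\sqrt{c}/\eps)^{1/H}$. Dudley's entropy bound then gives $\E\left[\sup_{0\le t\le T}Y_t\right]\le K\int_0^{\sqrt{c}\,T^H}\sqrt{\log N(\eps)}\,\d\eps$, and the substitution $\eps=\sqrt{c}\,T^H u$ with $u\in(0,1]$ turns the right-hand side into $K\sqrt{c}\,T^H\int_0^1\sqrt{\log(1+u^{-1/H})}\,\d u=:M\,T^H$, where $M=M(c,H)<\infty$ since $\int_0^1\sqrt{\log(1/u)}\,\d u<\infty$. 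The crucial point is that $M$ depends only on $c$ and $H$, the $T$-dependence having been entirely absorbed into the factor $T^H$.

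\emph{Step 2: concentration and conclusion.} Since $Y_0=0$ we have $\sup_{0\le t\le T}\E[Y_t^2]=\sup_{0\le t\le T}\E[(Y_t-Y_0)^2]\le cT^{2H}$, so the Borell--TIS inequality gives, for every $\lambda>0$, $\P\left(\sup_{0\le t\le T}Y_t\ge M T^H+\lambda\right)\le\exp\left(-\lambda^2/(2cT^{2H})\right)$. Now set $c_1:=2M$; for any $r\ge 1$, choosing $\lambda:=c_1 r T^H-M T^H=M T^H(2r-1)\ge M r T^H=\tfrac12 c_1 r T^H$ yields $\P\left(\sup_{0\le t\le T}Y_t\ge c_1 r T^H\right)\le\exp\left(-(c_1 r T^H/2)^2/(2cT^{2H})\right)=\exp\{-c_2 r^2\}$ with $c_2:=c_1^2/(8c)=M^2/(2c)$. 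Both $c_1$ and $c_2$ depend only on $c$ and $H$, which is exactly the claim.

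\emph{Main obstacle and alternative route.} The only substantive ingredient is the chaining estimate of Step 1; the rest is bookkeeping to keep the constants free of $T$. If one prefers a self-contained argument that invokes neither Dudley's theorem nor Borell--TIS, Steps 1 and 2 can be merged into a single dyadic chaining: with $D_m=\{jT2^{-m}:0\le j\le 2^m\}$ and $\pi_{m-1}(t)$ the nearest point of $D_{m-1}$ to $t$, the elementary Gaussian maximal tail bound gives $\P\left(\max_{t\in D_m}|Y_t-Y_{\pi_{m-1}(t)}|\ge\lambda_m\right)\le 2^{m+1}\exp\left(-\lambda_m^2 2^{2mH}/(2cT^{2H})\right)$; summing over $m$ with the choice $\lambda_m\asymp \sqrt{c}\,T^H 2^{-mH}(r+\sqrt{m})$ and using $\sup_{0\le t\le T}Y_t\le\sum_{m\ge1}\max_{t\in D_m}|Y_t-Y_{\pi_{m-1}(t)}|$ reproduces the bound $\exp\{-c_2 r^2\}$ after summing a geometric-type series in $m$. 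I expect the tracking of these constants (ensuring the final $c_1,c_2$ depend only on $c,H$) to be the most error-prone part of either approach.
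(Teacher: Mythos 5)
Your argument is correct, and it shares the paper's overall architecture --- first bound the ``center'' of $\sup_{0\le t\le T}Y_t$ by a constant times $T^H$, then apply Gaussian concentration with $\sup_t \E Y_t^2\le cT^{2H}$ --- but the key lemma in the first step is different. The paper compares $Y$ to a fractional Brownian motion via the Sudakov--Fernique inequality, so that $\E\sup_{0\le t\le T}Y_t\le\sqrt{c}\,\E\sup_{0\le t\le T}W^H_t=\sqrt{c}\,q_H T^H$, the factor $T^H$ coming for free from the self-similarity of $W^H$; it then centers at the median and invokes the concentration principle in that form. You instead run Dudley's entropy integral and extract the $T^H$ scaling by the substitution $\eps=\sqrt{c}\,T^H u$, which correctly reduces the covering numbers to the $T$-free bound $1+u^{-1/H}$, and you center at the mean via Borell--TIS; the bookkeeping with $c_1=2M$, $\lambda\ge\tfrac12 c_1 rT^H$ and $c_2=M^2/(2c)$ checks out, and your constants depend only on $c$ and $H$ as required. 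The comparison-principle route is shorter and conceptually cleaner (no entropy computation, the $T^H$ scaling is structural), while your entropy route is more self-contained in the sense that it does not require knowing that $\E\sup_{0\le t\le 1}W^H_t$ is finite as a separate input and generalizes more readily to index sets without a self-similar reference process. Your preliminary remark on continuity (separability) is a point the paper leaves implicit, and the fully elementary dyadic-chaining alternative you sketch would also work, though as you note the constant-tracking there is the delicate part.
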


\begin{proof}
 We may write the lemma's assumption as
\[
   \E[(Y_{t_1}-Y_{t_2})^2] \le c \, \E[(W^H_{t_1}-W^H_{t_2})^2], \qquad 0\le t_1,t_2\le T,
\]
where $W^H$ is an $H$-fractional Brownian motion. By the Sudakov--Fernique comparison principle \cite[p.190]{Lif95}, this inequality yields
\[
   \E \sup_{0\le t\le T} Y_t \le  \sqrt{c} \,  \E \sup_{0\le t\le T} W^H_t := \sqrt{c} \, q_H\, T^H.
\]
Let $m$ be a median of the r.v.\  $\sup_{0\le t\le T} Y_t$. Then
\[
   m \le  \E \sup_{0\le t\le T} Y_t \le  \sqrt{c} \, q_H\, T^H,
\]
where for the first inequality one can see \cite[p.143]{Lif95}.

On the other hand, under lemma's assumptions it is true that
\[
   \theta^2:= \sup_{0\le t \le T} \E Y_t^2 \le c \, T^{2H}.
\]

Now  for every $r\ge 1$ we obtain by the concentration principle \cite[p.141]{Lif95}
\begin{eqnarray*}
    \P\left( \sup_{0\le t\le T} Y_t \ge (2\sqrt{c}\,q_H)  r T^H \right)
    &\le&  \P\left( \sup_{0\le t\le T} Y_t  -m  \ge  (2\sqrt{c}\,q_H r - \sqrt{c}\,q_H) T^H  \right)
\\
    &=&  \P\left( \sup_{0\le t\le T} Y_t  -m  \ge   \sqrt{c}\,q_H r \, T^H  \right)
\\
    &\le& \bar\Phi \left( -  \sqrt{c} q_H r T^{H} / \theta  \right)
\\
    &\le& \bar\Phi \left( -  q_H r  \right)
    \le \exp( - q_H^2 r^2 /2 ),
\end{eqnarray*}
where $\bar\Phi(x)=\P(\mathcal{N}(0,1)>x)$, as required.
\end{proof}

\subsection{Convergence of the stochastic system}\label{sec:convsto}

In this subsection, we start with the description of the main result for the convergence of the stochastic system.

\begin{lem} \label{lem:asconv}
Let  $(B^k_u)_{u\geq 0}$, $k=1,2,\ldots$, be a sequence of independent Brownian motions. Consider
$$
  \dissto\left(t,\frac{i}{d}\right) :=  \sigma \sum_{k=1}^{d-1} \int_0^t e^{(t-u) d^2 \lambda_k} \d B^k_u  \ \cdot \  \sqrt{2}\sin(k \pi i/d)
$$
and
\[
  S(t,v) :=  \sigma \sum_{k=1}^\infty \int_0^t e^{-(t-u)\pi^2 k^2} \d B^k_u  \ \cdot \  \sqrt{2} \sin(k \pi v).
\]
Define $\dissto_d(t,v):=\dissto(t,\lfloor vd\rfloor/d)$. Then, for any $T>0$, 
\[
     \sup_{t\in[0,T], v\in[0,1]} |\dissto_d(t,v)-S(t,v)| \to 0
\]     
almost surely.
\end{lem}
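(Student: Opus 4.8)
The goal is to show that the finite-sum Ornstein--Uhlenbeck random fields $\dissto_d$ converge almost surely, uniformly on $[0,T]\times[0,1]$, to the infinite series $S$. The plan is to split the discrepancy into three contributions and estimate each one. Write
\[
  \dissto_d(t,v) - S(t,v) = \underbrace{\left(\dissto_d(t,v) - \dissto\left(t, \tfrac{\lfloor vd\rfloor}{d}\right)\right)}_{=\,0 \text{ by definition}} + \left[\dissto\left(t,\tfrac{\lfloor vd\rfloor}{d}\right) - S\left(t,\tfrac{\lfloor vd\rfloor}{d}\right)\right] + \left[S\left(t,\tfrac{\lfloor vd\rfloor}{d}\right) - S(t,v)\right].
\]
For the last bracket I would use that $S$ has a modification that is almost surely continuous on $[0,T]\times[0,1]$ --- this follows from Lemma~\ref{lem:varest} (which gives $\V[w_k]\le ck^{-2}$, so the series defining $S$ converges uniformly in $L_2$) together with the Kolmogorov continuity criterion applied to the tail sums, or alternatively from Lemma~\ref{lem:ld-lemma} applied to each coordinate and a Borel--Cantelli argument over the tail. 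Since $|\tfrac{\lfloor vd\rfloor}{d} - v|\le 1/d \to 0$ uniformly, uniform continuity of $S$ makes this term go to zero a.s. So the real content is the middle bracket: I must show
\[
  \sup_{t\in[0,T],\, v\in[0,1]} \left| \sigma\sum_{k=1}^{d-1} \int_0^t e^{(t-u)d^2\lambda_k}\,\d B^k_u\cdot\sqrt2\sin(k\pi v) - \sigma\sum_{k=1}^\infty \int_0^t e^{-(t-u)\pi^2k^2}\,\d B^k_u\cdot\sqrt2\sin(k\pi v)\right| \to 0 \quad\text{a.s.}
\]

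\textbf{Handling the middle term.} Here I would further decompose into the high-frequency tail and the low-frequency bulk. Fix a large $K$. For $k > K$ the contribution to both series is controlled in $L_2$ uniformly in $d$: using Lemma~\ref{lem:varest}, $\V[w_{k,d}]\le ck^{-2}$ and $\V[w_k]\le ck^{-2}$, and since the $B^k$ are independent the partial sums over $k>K$ form centered Gaussian processes (in $(t,v)$) whose variance is bounded by $\mathrm{const}\cdot\sum_{k>K}k^{-2}$, which is small uniformly in $d$ and in $(t,v)$. To upgrade this to a uniform-in-$(t,v)$ almost-sure bound I would apply Lemma~\ref{lem:ld-lemma}: the tail process $Y^{(K,d)}_{t,v} := \sigma\sum_{K<k\le d-1}w_{k,d}\sqrt2\sin(k\pi v)$ is a centered Gaussian process on the compact index set $[0,T]\times[0,1]$, and I would verify a Hölder-type increment bound $\E[(Y_{(t_1,v_1)}-Y_{(t_2,v_2)})^2]\le c_K(|t_1-t_2|+|v_1-v_2|)^{2H}$ with $c_K\to0$ as $K\to\infty$ (the increments in $v$ cost a factor $\min(k|v_1-v_2|,1)$ and in $t$ a factor controlled by the OU increments; summing against $k^{-2}$ after interpolation gives the needed power). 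Lemma~\ref{lem:ld-lemma} then yields $\P(\sup Y^{(K,d)}\ge \eps)\le\exp(-c\eps^2/c_K)$, and by choosing $K=K(\eps)$ large and then a Borel--Cantelli argument over $d$, the tail contribution is a.s.\ eventually below $\eps$, uniformly in $d$.

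\textbf{The low-frequency bulk.} For the finitely many $k\le K$ (with $d$ eventually large so all such $k$ appear in the $\dissto_d$ sum), I would show termwise that $\int_0^t e^{(t-u)d^2\lambda_k}\,\d B^k_u \to \int_0^t e^{-(t-u)\pi^2k^2}\,\d B^k_u$ uniformly in $t\in[0,T]$ almost surely. Since $d^2\lambda_k = -2d^2(1-\cos(k\pi/d))\to -\pi^2k^2$ as $d\to\infty$ (for fixed $k$), the difference of the two integrands, evaluated against the same Brownian motion, is a Gaussian process in $t$ with small variance; one can bound $\E[(\text{difference})_{t_1}-(\text{difference})_{t_2})^2]$ by a constant times $|t_1-t_2|$ with a prefactor $\delta_{k,d}\to0$, so Lemma~\ref{lem:ld-lemma} plus Borel--Cantelli gives uniform-in-$t$ a.s.\ convergence for each fixed $k$. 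Multiplying by the bounded factor $\sqrt2\sin(k\pi v)$ and summing the finitely many terms $k\le K$ preserves uniform convergence. Combining: given $\eps>0$, pick $K$ so both tails are $<\eps/3$ a.s.\ for all large $d$, then pick $d$ large so the bulk is $<\eps/3$; this gives $\limsup_d \sup_{t,v}|\dissto_d - S|\le\eps$ a.s., and letting $\eps\downarrow0$ along a sequence finishes the proof.

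\textbf{Main obstacle.} The delicate point is the uniform-in-$d$ control of the high-frequency tail: one needs a Hölder increment estimate for $Y^{(K,d)}$ whose constant is uniform in $d$ \emph{and} decays in $K$, so that a single Borel--Cantelli argument closes simultaneously over $d\to\infty$ and $K\to\infty$. This requires carefully exploiting the oscillation of $\sin(k\pi v)$ (to gain spatial regularity beyond what $\sum k^{-2}<\infty$ alone gives) and the smoothing of the OU integral in $t$, interpolating to extract a genuine positive Hölder exponent $H$; getting the bookkeeping right so that the exponent and constants are uniform in $d$ is where the real work lies.
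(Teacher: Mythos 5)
Your overall architecture (continuity of $S$ to absorb the spatial discretization, plus a frequency cutoff $K$ separating a ``bulk'' $k\le K$ from a ``tail'' $k>K$) is reasonable, and your bulk argument is sound: for fixed $k$ the difference $w_{k,d}-w_k$ has variance of order $k^2/d^4$, which after interpolation gives an increment bound with a $d$-decaying prefactor, so Lemma~\ref{lem:ld-lemma} plus Borel--Cantelli does yield uniform-in-$t$ a.s.\ convergence for each fixed $k$. The problem is in the tail. Your bound for the high-frequency piece $Y^{(K,d)}$ is of the form $\P\bigl(\sup |Y^{(K,d)}|\ge\eps\bigr)\le\exp(-c\eps^2/c_K)$ with $c_K\to0$ as $K\to\infty$ but \emph{no decay in $d$}. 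You then invoke ``a Borel--Cantelli argument over $d$'' to conclude that the tail is a.s.\ eventually below $\eps$; but $\sum_d\exp(-c\eps^2/c_K)=\infty$ for every fixed $K$, so Borel--Cantelli gives nothing here. A small-probability bound that is merely uniform in $d$ cannot produce an almost-sure statement about all large $d$; this is the step where the proof breaks. (A secondary, fixable point: Lemma~\ref{lem:ld-lemma} as stated is for one-parameter processes with $Y_0=0$, so applying it to the two-parameter field $Y^{(K,d)}_{t,v}$ requires a two-parameter analogue via metric entropy, which you gesture at but do not supply.)

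The paper avoids this trap by never isolating a tail whose smallness is only uniform in $d$. Instead it proves the pointwise comparison $\V[\dissto_d(t,v)-S(t,v)]\le c\,d^{-1}$ for the \emph{full} sums (splitting into $\phi_{k,d}-\phi_k$, $w_{k,d}-w_k$, and the $k\ge d$ remainder), so that every probability estimate carries an explicit factor decaying in $d$: Gaussian tails $\exp(-c\eps^2 d)$ at the $O(d^2)$ grid points $(t_j,v_i)$, and, via Lemma~\ref{lem:ld-lemma} with $H=1/4$ applied to increments over cells of width $T/d$, bounds $\exp(-c\eps^2 d^{1/2})$ for the oscillation of $\dissto_d$ between grid points. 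These are summable in $d$, so Borel--Cantelli closes. Your argument could be repaired either by letting $K=K_d\to\infty$ with $d$ (so that $\exp(-c\eps^2/c_{K_d})$ becomes summable, at the cost of making the bulk estimate uniform over $k\le K_d$), or by comparing the $d$-tail to the limiting tail $\sum_{k>K}w_k\sqrt{2}\sin(k\pi\cdot)$, whose difference has variance $O(d^{-1})$ --- but the latter essentially reconstructs the paper's pointwise $L_2$ comparison, which is the missing ingredient.
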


We postpone the proof of this result to Subsection~\ref{ss:asconvergence}
and rather describe why Lemma~\ref{lem:asconv} yields Theorems~\ref{thm:mainthm} and
  \ref{thm:stolem2} : 

\begin{lem}
Let $(B_t^{1,d},\ldots,B_t^{d-1,d})^\top := Q (B_t^1,\ldots,B_t^{d-1})^\top$. Consider the system
\begin{align}
    \d \dissto\left(t, \frac{i}{d}\right) &= d^2 \left( \dissto\left(t, \frac{i+1}{d}\right)-2\dissto\left(t, \frac{i}{d}\right)+\dissto\left(t, \frac{i-1}{d}\right)\right) \d t + \sqrt{d}\sigma \d B^{i,d}_t,\qquad i=1,\ldots,d-1; \notag
\\
     \dissto(t,0)&=0,\qquad \dissto(t,1)=0,\qquad \dissto\left(0,\frac{i}{d}\right)=0,\quad i=1,\ldots,d. \label{eqn:stochsystem}
\end{align}
An explicit solution of the system \eqref{eqn:stochsystem} is given by
$$
     \dissto\left(t,\frac{i}{d}\right)=\sigma  \sum_{k=1}^{d-1} \int_0^t e^{ d^2 \lambda_k (t-u)} \d B_u^k \cdot \sqrt{2} \sin(ik \pi/d).
$$
\label{lem:stolem2}
    \end{lem}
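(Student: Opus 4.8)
The plan is to verify directly that the proposed formula solves the system, which is a straightforward computation once we exploit the diagonalization of $\MA$ from Lemma~\ref{lem:eigenvectors}. First I would observe that the system \eqref{eqn:stochsystem} is a linear SDE: writing $\dissto_t := (\dissto(t,1/d),\ldots,\dissto(t,(d-1)/d))^\top$ and recalling that the interaction term $\dissto(t,(i+1)/d)-2\dissto(t,i/d)+\dissto(t,(i-1)/d)$ with the zero boundary conditions $\dissto(t,0)=\dissto(t,1)=0$ is exactly the $i$-th coordinate of $\MA \dissto_t$, the system reads $\d \dissto_t = d^2 \MA\, \dissto_t\,\d t + \sqrt{d}\,\sigma\, \d\mathbf{B}^d_t$ with $\dissto_0=0$, where $\mathbf{B}^d_t=(B^{1,d}_t,\ldots,B^{d-1,d}_t)^\top = Q(B^1_t,\ldots,B^{d-1}_t)^\top$. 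This is a multivariate Ornstein--Uhlenbeck SDE with zero initial condition, whose unique (strong) solution is the stochastic convolution $\dissto_t = \sqrt{d}\,\sigma \int_0^t e^{d^2(t-u)\MA}\,\d\mathbf{B}^d_u$.

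Next I would diagonalize. Using $\MA = Q\Lambda Q^\top$ with $Q$ orthonormal (Lemma~\ref{lem:eigenvectors}), we have $e^{d^2(t-u)\MA} = Q\, e^{d^2(t-u)\Lambda}\, Q^\top$, so
\[
  \dissto_t = \sqrt{d}\,\sigma\, Q \int_0^t e^{d^2(t-u)\Lambda}\, Q^\top\, \d\mathbf{B}^d_u
  = \sqrt{d}\,\sigma\, Q \int_0^t e^{d^2(t-u)\Lambda}\, \d(Q^\top\mathbf{B}^d_u).
\]
By the definition of $\mathbf{B}^d$ we have $Q^\top \mathbf{B}^d_u = Q^\top Q (B^1_u,\ldots,B^{d-1}_u)^\top = (B^1_u,\ldots,B^{d-1}_u)^\top$, and since $\Lambda$ is diagonal the $k$-th coordinate of the integral is $\int_0^t e^{d^2\lambda_k(t-u)}\,\d B^k_u$. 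Multiplying by $\sqrt{d}\,\sigma\, Q$ and reading off the $i$-th coordinate via $Q_{i,k}=\sqrt{2/d}\,f_k^i = \sqrt{2/d}\,\sin(ik\pi/d)$, the factors of $\sqrt{d}$ and $\sqrt{2/d}$ combine to $\sqrt{2}$, yielding exactly
\[
  \dissto\!\left(t,\tfrac{i}{d}\right) = \sigma \sum_{k=1}^{d-1}\int_0^t e^{d^2\lambda_k(t-u)}\,\d B^k_u\cdot\sqrt{2}\,\sin(ik\pi/d),
\]
as claimed. Finally I would note that $Q$ is orthogonal, so the components of $\mathbf{B}^d$ are indeed independent standard Brownian motions (a linear orthogonal image of a standard Gaussian white noise), which makes the claim about $(B^{1,d},\ldots,B^{d-1,d})$ consistent with the statement.

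There is no real obstacle here: the only points requiring a word of care are (i) checking that the discrete Laplacian with the zero Dirichlet boundary values is represented by $\MA$ acting on the interior vector — this is immediate from the tridiagonal structure in \eqref{eqn:defnofA} and the fact that the $i=1$ and $i=d-1$ equations lose their boundary terms because $\dissto(t,0)=\dissto(t,1)=0$; and (ii) that one may interchange the constant matrices $Q$, $Q^\top$ with the Itô integral, which is justified since they are deterministic. I would also remark, if desired, that uniqueness of the solution is standard for linear SDEs with Lipschitz (indeed linear) coefficients, so the explicit stochastic convolution is \emph{the} solution, not merely \emph{a} solution — though the statement of the lemma only asks for an explicit solution, so even a direct verification (differentiating the convolution under the Itô integral, or applying Itô's formula to $e^{-d^2 t\MA}\dissto_t$) suffices.
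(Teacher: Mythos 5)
Your proposal is correct and follows essentially the same route as the paper: both write the system as a linear (multivariate Ornstein--Uhlenbeck) SDE whose solution is the stochastic convolution $\sqrt{d}\,\sigma\int_0^t e^{d^2(t-u)\MA}\,\d\mathbf{B}^d_u$, and then diagonalize $\MA=Q\Lambda Q^\top$ to read off the coordinates. The only cosmetic difference is that the paper cites the convolution representation from an earlier work (Lemma 6 of \cite{ABL1}) whereas you derive it from standard linear SDE theory.
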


Note that the representation of the stochastic system in Lemma~\ref{lem:stolem2} coincides with the term studied in Lemma~\ref{lem:asconv}. Therefore, Theorem~\ref{thm:stolem2} follows from Lemma~\ref{lem:asconv} and~\ref{lem:stolem2}. Finally, Theorem~\ref{thm:mainthm} follows from this and Lemma~\ref{lem:seven} for the deterministic part.

\medskip
\begin{proof}[ of Lemma~\ref{lem:stolem2}]
The first step is to write the solution  of the system (\ref{eqn:stochsystem}) as
$$
\dissto\left(t,\frac{i}{d}\right)=\sqrt{d}\sigma e^{d^2\MA t } \int_0^t e^{-d^2 \MA u} \d \bar B_u^d,
$$
where $\bar B_t^d = Q \sum_{k=1}^{d-1} e_k B^k_t$ and $(B^k_t)_{t\geq 0}$, $k=1,\ldots,d-1$, are the given independent Brownian motions.
This representation of the solution of the system (\ref{eqn:stochsystem}) is part of the statement of Lemma 6 in \cite{ABL1}.

We can then compute as follows:
\begin{align*}
\dissto\left(t,\frac{i}{d}\right)= \ & \sqrt{d}\sigma e^{d^2\MA t } \int_0^t e^{-d^2 \MA u}Q \d \bar B_u^d
\\
=\ & \sqrt{d}\sigma Q e^{d^2\Lambda t } Q^\top \int_0^t Q e^{-d^2 \Lambda u} Q^\top \sum_{k=1}^{d-1} Qe_k \d B_u^k
\\
=\ & \sqrt{d}\sigma\sum_{k=1}^{d-1} \int_0^t Q e_k \cdot e^{d^2\lambda_k(t-u)}  \d B_u^k
\\
=\ & \sqrt{d}\sigma\sum_{k=1}^{d-1} \int_0^t \sqrt{\frac{2}{d}} f_k \cdot e^{d^2\lambda_k(t-u)} \d B_u^k
\\
=\ & \sigma\sum_{k=1}^{d-1} \int_0^t  e^{d^2\lambda_k (t-u)} \d B_u^k\cdot \sqrt{2} f_k,
\end{align*}
as claimed.
  \end{proof}

%

\subsection{Quadratic mean evaluations}

The proof of Lemma~\ref{lem:asconv} will be based on several upper bounds for quadratic means that we collect here.

We use the following abbreviations
$$
     w_k:=w_k(t) := \int_0^t e^{-k^2 \pi^2 (t-u)} \d B_u^k,
     \qquad w_{k,d}:=w_{k,d}(t):=\int_0^t e^{d^2 \lambda_k (t-u)} \d B_u^k,
$$
$\phi_{k,d}:=\phi_{k,d}(\lfloor vd\rfloor /d):=\sin(k \pi \lfloor vd\rfloor /d)$,
and $\phi_k=\phi_k(v)=\sin(k \pi v)$.

In this notation,
$$
      S(t,v)=\sum_{k=1}^\infty w_k(t) \sqrt{2} \phi_k(v)\qquad\text{and}
      \qquad \dissto_d(t,v)=\sum_{k=1}^{d-1} w_{k,d}(t) \sqrt{2} \phi_{k,d}(\lfloor vd\rfloor/d).
$$

\bigskip

\paragraph*{Step 1: Quadratic mean convergence $\dissto_d(t,v)\to S(t,v)$ for any fixed $t\geq 0$ and $v\in[0,1]$.} ~ \\


Let us decompose
\begin{eqnarray*}
       \dissto_d(t,v)- S(t,v) &=& \sum_{k=1}^{d-1} w_{k,d} \sqrt{2} \phi_{k,d}
       - \sum_{k=1}^\infty w_k \sqrt{2} \phi_k
\\
       &=& \sum_{k=1}^{d-1} w_{k,d} \sqrt{2} (\phi_{k,d}-\phi_k)+ \sum_{k=1}^{d-1} (w_{k,d}-w_k) \sqrt{2} \phi_k
         - \sum_{k=d}^\infty w_k \sqrt{2} \phi_k
\\
&=:& S_1 + S_2 + S_3.
\end{eqnarray*}

In order to estimate the variance of $\dissto_d(t,v)- \dissto(t,v)$, we shall estimate
\begin{equation} \label{eqn:vare0}
    \V[\dissto_d(t,v)- \dissto(t,v)] = \V[S_1+S_2+S_3] \leq 3 (\V [S_1] +\V [S_2] +\V [S_3] )
\end{equation}
and control the variances of the $S_i$ individually.

First note that
$$
     \V[S_1] = \sum_{k=1}^{d-1} \V[w_{k,d}]\cdot 2 |\phi_{k,d}-\phi_k|^2.
$$
Using that $|\phi_{k,d}-\phi_k|=|\sin(k\pi \lfloor vd\rfloor /d ) - \sin(k\pi v)|
\leq |k\pi \lfloor vd \rfloor /d - k \pi v | \leq k\pi /d$ and the estimate from Lemma~\ref{lem:varest},
we obtain
\begin{equation} \label{eqn:step1vare1}
     \V[S_1] \leq 2 \sum_{k=1}^{d-1} c k^{-2} \cdot (k \pi /d)^2 = c' d^{-1}.
\end{equation}
Further,
$$
     \V[S_2] = \sum_{k=1}^{d-1} \V[w_{k,d}-w_k]\cdot 2 |\phi_k|^2 \leq  2 \sum_{k=1}^{d-1} \V[w_{k,d}-w_k].
$$
We observe that
$$
     \V[w_{k,d}-w_k] = \int_0^t |e^{d^2 \lambda_k u} - e^{-\pi^2 k^2 u} |^2 \d u.
$$
Recall that
\begin{eqnarray*}
      &&\int_0^\infty |e^{-r u} - e^{-s u}|^2 \d u = \int_0^\infty e^{-ru} |1 - e^{-(s-r) u}|^2 \d u
\\
     &\leq&  \int_0^\infty e^{-ru} (s-r)^2 u^2 \d u =(s-r)^2 c r^{-3}.
\end{eqnarray*}
Using this with $r=-d^2\lambda_k$, $s=\pi^2k^2$, and applying further that $|s-r|=|\pi^2 k^2 + d^2 \lambda_k|=d^2\lambda_k
+ \pi^2k^2=-2d^2(1-\cos(k\pi/d))+\pi^2k^2\leq c d^2 k^4 / d^4 = c k^4 / d^2$
(where we used $\cos x \leq 1 - x^2/2 + c x^4$ for a sufficiently large constant $c$ and all $x$)
and that $r=-d^2\lambda_k \geq c\pi^2 k^2$ (due to $\cos x \leq 1- cx^2$ for all $|x|\leq \pi$
and a sufficiently small constant $c>0$), we obtain that
\begin{equation} \label{eqn:step1vare2}
     \V[S_2]\leq   2 \sum_{k=1}^{d-1} \V[w_{k,d}-w_k] \leq c \sum_{k=1}^{d-1} (  k^4 / d^2 )^2 (k^2)^{-3}
     \leq c' d^3/d^4=c' d^{-1}.
\end{equation}
Finally, using Lemma~\ref{lem:varest}, we get
\begin{equation} \label{eqn:step1vare3}
       \V[S_3] = \sum_{k=d}^\infty \V[w_k] \cdot 2 \phi_k^2 \leq 2\sum_{k=d}^\infty \V[w_k] \leq c \sum_{k=d}^\infty k^{-2} = c' d^{-1}.
\end{equation}
Putting (\ref{eqn:vare0}), (\ref{eqn:step1vare1}), (\ref{eqn:step1vare2}), and (\ref{eqn:step1vare3}) together yields
\begin{equation} \label{eqn:step1estim}
      \V[ \dissto_d(t,v) - S(t,v)] \leq c d^{-1},\qquad \text{for all}~t\geq 0,v\in[0,1].
\end{equation}

\paragraph*{Step 2: Mean fluctuations in $v$}  ~\\

Fix $t>0$. We have, for any $v,v'\in[0,1]$,
\begin{eqnarray}   \nonumber
   \V[ S(t,v)-S(t,v') ]
   &=&  \V[ \sum_{k=1}^{\infty} w_k (\sin(\pi k v)-\sin(\pi k v')) ]
\\    \nonumber
    &=& \sum_{k=1}^{\infty} \V[w_k] |\sin(\pi k v)-\sin(\pi k v')|^2
\\   \nonumber
    &\le&  c \sum_{k=1}^{\infty} k^{-2} \min\{1, k^2 |v-v'|^2\}
\\  \nonumber
     &=&  c \sum_{k\le 1/ |v-v'|}^{\infty} k^{-2} k^2 |v-v'|^2  + c  \sum_{k> 1/ |v-v'| }^{\infty} k^{-2}
\\  \label{eqn:v-est}
   &\le& c |v-v'|.
\end{eqnarray}
We stress that the constant $c$ does not depend on $t$.

\paragraph*{Step 3: Mean fluctuations in $t$}  ~\\

Consider $t'>t>0$ and fix $v\in[0,1]$. Then we have
\begin{eqnarray}
     &&  \V[ S(t,v) - S(t',v)] \notag
\\
     &=& \sum_{k=1}^\infty \V[w_k(t)-w_k(t')]\cdot 2\cdot \phi_k^2  \notag
\\
     &\leq & 2 \sum_{k=1}^\infty \V[w_k(t)-w_k(t')]  \cdot 1  \notag
\\
     &=& 2 \sum_{k=1}^\infty \left[\int_0^t (1 - e^{-(t'-t)} )^2 e^{-(t-u)2\pi^2k^2} \d u
     + \int_t^{t'} e^{-(t'-u)2\pi^2 k^2 } \d u\right]. \label{eqn:silly}
\end{eqnarray}

Let us treat the two integrals separately.
\begin{eqnarray*}
     \int_0^t (1 - e^{-(t'-t)} )^2 e^{-(t-u)2\pi^2k^2} \d u   &\leq & (t'-t)^2 \int_0^t e^{-(t-u)2\pi^2k^2} \d u
\\
     &= &(t'-t)^2\int_0^t e^{-u2\pi^2k^2} \d u  \leq \frac{(t'-t)^2}{2\pi^2k^2}.
\end{eqnarray*}
Summing this in $k$ gives $c(t'-t)^2$. The second term in (\ref{eqn:silly}) equals
\begin{eqnarray*}
     2\sum_{k=1}^\infty \int_0^{t'-t}  e^{-2u \pi^2 k^2} \d u
     &= & 2 \sum_{k\leq (t'-t)^{-1/2}} \int_0^{t'-t}  e^{-2\pi^2 k^2 u}  \d u
\\
     && +  2\sum_{k> (t'-t)^{-1/2}} \int_0^{t'-t} e^{-2\pi^2 k^2 u}  \d u.
\end{eqnarray*}
In the first sum, we estimate the integrand by $1$ so that the whole sum is bounded by
$$
    2 (t'-t)^{-1/2} (t'-t) = 2 (t'-t)^{1/2}.
$$

The second sum is bounded by
$$
    2\sum_{k> (t'-t)^{-1/2}} \int_0^\infty e^{-2\pi^2 k^2 u}  \d u
    = 2\sum_{k> (t'-t)^{-1/2}} \frac{1}{2\pi^2 k^2} \leq c (t'-t)^{1/2}.
$$
This shows that
\begin{equation} \label{eqn:vartest}
      \V[ S(t,v) - S(t',v)] \leq c|t'-t|^{1/2}\qquad \text{for all}~t,t'\geq 0, v\in[0,1].
\end{equation}
We stress that the constant $c$ does not depend on $v$.


Using exactly the same arguments together with the fact that
$d^2\lambda_k \leq -c \pi^2 k^2$ for all $k$ and an appropriate constant $c>0$ --
we can obtain the following analog to estimate (\ref{eqn:vartest}):
\begin{equation} \label{eqn:varestdiffld}
      \V[ \dissto_d(t,v) - \dissto_d(t',v)] \leq c |t'-t|^{1/2}\qquad \text{for all}~t,t'\geq 0, v\in[0,1].
\end{equation}
Again, we stress that the constant $c>0$ does not depend on $v$.

\subsection{Joint continuity of $(S(t,v))$.}
From the estimates (\ref{eqn:v-est}) and (\ref{eqn:vartest}), it is straightforward to obtain the power estimate
\[
    \V[ S(t,v) - S(t',v') ]  \leq 2 c (  |t'-t|^{1/2}+ |v'-v|),
\]
for any $(t,v),(t',v') \in[0,T]\times[0,1]$.
Now the classical continuity criterion for Gaussian fields, see e.g. \cite[p.220]{Lif95},
implies that $(S(t,v))$ is almost surely continuous jointly in $t,v$ for $t,v\in[0,T]\times[0,1]$.
As the latter set is compact, $(S(t,v))$ is even uniformly continuous.


\subsection{Almost sure convergence (Proof of Lemma~\ref{lem:asconv})}
\label{ss:asconvergence}


Fix $T>0$. We shall consider the following time and space discretization:
\begin{eqnarray} \label{eqn:timedisc}
    t_j &:=& \frac{j T}{d}, \qquad j=0,\ldots,d.
\\
     v_i &:=& \frac{i}{d}, \qquad i=0,\ldots,d.
\end{eqnarray}
For $t\in [t_j,t_{j+1})$ we denote $\wt=\wt(t)=t_j$.
For $v\in[v_{i-1},v_i)$ let $\wv=\wv(v):=v_i$.

First notice that
\begin{eqnarray*}
   \dissto_d(t,v)-S(t,v)  &=& \dissto_d(t, \wv)-S(t,v)
\\
    &=& \dissto_d(t,\wv)-\dissto_d(\wt,\wv)
\\
    && ~+~\dissto_d(\wt,\wv)-S(\wt,\wv)
\\
    && ~ +~ S(\wt,\wv)-S(t,v).
\end{eqnarray*}
We will provide probabilistic bounds for the first two terms, while for the third term
we simply have
\begin{equation} \label{convergence3}
     \lim_{d\to\infty}  \sup_{(t,v)\in[0,T]\times[0,1]} |S(\wt,\wv)-S(t,v)| =0 \ \qquad \textrm{a.s.}
\end{equation}
by continuity of the process $S$, cf. the previous subsection.
\medskip

{\it A bound for the first term.}\  Note that
\[
   \sup_{(t,v)\in[0,T]\times[0,1]}  |\dissto_d(t,\wv)-\dissto_d(\wt,\wv)|
   =
   \max_{0\le j\le d-1}   \max_{0\le i\le d} \sup_{0\le \tau\le T/d}
     | \dissto_d(t_j+\tau, v_i)-\dissto_d(t_j, v_i)|.
\]
Fix $\eps>0$ and use Lemma~\ref{lem:ld-lemma} for
$Y_\tau:=\dissto_d(t_j+\tau,v_i)-\dissto_d(t_j,v_i)$ with $\tau\in[0,T/d]$.
The assumption of the lemma is verified by (\ref{eqn:varestdiffld}) with $H=1/4$.

Define $r$ via $r := \eps (d/T)^{1/4}/c_1$ where $c_1$ is the constant from Lemma~\ref{lem:ld-lemma}.
Then $c_1r(T/d)^{1/4}=\eps$  and $r\geq 1$ for $d$ large enough. This gives
\begin{eqnarray*}
 &&  \P\left( \sup_{0 \le \tau \le T/d}  | \dissto_d(t_j+\tau, v_i)-\dissto_d(t_j, v_i)| \ge \eps \right)
   =  \P\left( \sup_{0\le\tau\le T/d}  |Y_\tau| \ge \eps \right)
\\
   &\le& 2\,  \P\left( \sup_{0 \le \tau \le T/d}  Y_\tau \ge \eps \right)
   \le   2 \exp\left( -c_2 \eps^2 (d/T)^{1/2}/c_1^2  \right).
\end{eqnarray*}
It follows that
\[
   \P\left(   \sup_{(t,v)\in[0,T]\times[0,1]}  |\dissto_d(t,\wv)-\dissto_d(\wt,\wv)| \ge \eps \right)
   \le   (d+1)^2 \exp\left( -c_2 \eps^2 (d/T)^{1/2}/c_1^2  \right)
\]
and by the Borel--Cantelli lemma we obtain
\begin{equation} \label{convergence1}
     \lim_{d\to\infty}  \sup_{(t,v)\in[0,T]\times[0,1]}   |\dissto_d(t,\wv)-\dissto_d(\wt,\wv)|   =0 \ \qquad \textrm{a.s.}
\end{equation}
\medskip

{\it A bound for the second term.}\  Note that
\[
   \sup_{(t,v)\in[0,T]\times[0,1]} |\dissto_d(\wt,\wv)-S(\wt,\wv)|
   = \max_{0\le j\le d} \ \max_{0\le i\le d} |\dissto_d(t_j,v_i)-S(t_j,v_i)|.
\]
By using the variance bound \eqref{eqn:step1estim} for every pair $(i,j)$ we have
\[
   \P\left( |\dissto_d(t_j,v_i)-S(t_j,v_i)| \ge \eps \right)
   \le
   \exp\left( - \eps^2 d^2 / (2 c^2) \right),
\]
hence,
\[
   \P\left(  \sup_{(t,v)\in[0,T]\times[0,1]} |\dissto_d(\wt,\wv)-S(\wt,\wv)|\ge \eps\right)
   \le (d+1)^2 \exp\left( - \eps^2 d^2 / (2 c^2) \right)
\]
and by the Borel--Cantelli lemma we obtain
\begin{equation} \label{convergence2}
     \lim_{d\to\infty}  \sup_{(t,v)\in[0,T]\times[0,1]} |\dissto_d(\wt,\wv)-S(\wt,\wv)|
       =0 \ \qquad \textrm{a.s.}
\end{equation}
By combining \eqref{convergence3}, \eqref{convergence1}, and \eqref{convergence2}  we obtain
\[
    \lim_{d\to\infty}  \sup_{(t,v)\in[0,T]\times[0,1]}   |\dissto_d(t,v)-S(t,v)|
       =0 \ \qquad \textrm{a.s.},
\]
as claimed.

\end{document}